\documentclass[10pt]{article}
\title{Minimal polynomial descriptions of polyhedra  and special semialgebraic sets}
\author{Gennadiy Averkov\footnote{Fakult\"at f\"ur Mathematik, Universit\"at Magdeburg, Universit\"atsplatz 2, D-39106 Magdeburg, email: averkov@ovgu.de} \ and Ludwig Br\"ocker\footnote{Mathematisches Institut, Universit\"at M\"unster, Einsteinstr. 62, D-48149, M\"unster, email: broe@math.uni-muenster.de}}

\usepackage{amsfonts,amsmath,amssymb,enumerate,amsthm,ifthen,graphpap,longtable,time,xspace,graphicx}
\usepackage[active]{srcltx}

\oddsidemargin=0cm \textwidth=16cm \hoffset=0cm \marginparwidth=0cm \voffset=0cm \topmargin=0cm \headheight=0cm \headsep=0cm \textheight=23cm \footskip=1cm

\theoremstyle{plain}

\newtheorem{nn}{}[section]
\newtheorem{corollary}[nn]{Corollary}
\newtheorem{lemma}[nn]{Lemma}
\newtheorem*{lemma*}{Lemma}
\newtheorem{theorem}[nn]{Theorem}
\newtheorem{proposition}[nn]{Proposition}

\theoremstyle{definition}

\newtheorem{definition}[nn]{Definition}
\newtheorem{notationsremarks}[nn]{Notations and Remarks}
\newtheorem{remark}[nn]{Remark}

\numberwithin{equation}{section}

\newcommand{\aff}{\rmcmd{aff}}
\newcommand{\bd}{\rmcmd{bd}}
\newcommand{\calF}{\mathcal{F}}
\newcommand{\calU}{\mathcal{U}}
\newcommand{\cl}{\rmcmd{cl}}

\newcommand{\dotvar}{\,\cdot\,}
\newcommand{\eps}{\varepsilon}
\newcommand{\ext}{\rmcmd{ext}}
\newcommand{\intr}{\rmcmd{int}}
\newcommand{\mymark}[1]{{#1}^\ast}
\newcommand{\natur}{\mathbb{N}}
\newcommand{\overtwocond}[2]{\substack{{#1} \\ {#2}}}
\newcommand{\real}{\mathbb{R}}
\newcommand{\relint}{\rmcmd{relint}}
\newcommand{\rmcmd}[1]{\mathop{\mathrm{#1}}\nolimits}
\newcommand{\setcond}[2]{\left\{#1\,:\,#2\right\}}
\newcommand{\term}[1]{\emph{#1}}
\newcommand{\thmtitle}[1]{{\upshape(#1)}}
\newcommand{\sign}{\rmcmd{sign}}

\begin{document}
\maketitle

\begin{abstract}
We show that a $d$-dimensional polyhedron $S$ in $\real^d$ can be represented by $d$-polynomial inequalities, that is, $S = \setcond{x \in \real^d}{p_0(x) \ge 0, \ldots, p_{d-1}(x) \ge 0}$, where $p_0,\ldots,p_{d-1}$ are appropriate polynomials. Furthermore, if an elementary closed semialgebraic set $S$ is given by polynomials $q_1,\ldots,q_k$ and for each $x \in S$ at most $s$ of these polynomials vanish in $x$, then $S$ can be represented by $s+1$ polynomials (and by $s$ polynomials under the extra assumption that the number of points $x \in S$ in which $s$ $q_i$'s vanish is finite).
\end{abstract}

\newtheoremstyle{itsemicolon}{}{}{\mdseries\rmfamily}{}{\itshape}{:}{ }{}
\newtheoremstyle{itdot}{}{}{\mdseries\rmfamily}{}{\itshape}{.}{ }{}
\theoremstyle{itdot}
\newtheorem*{msc*}{2000 Mathematics Subject Classification} 

\begin{msc*}
  Primary: 14P05, 52B11, 14Q99;  Secondary: 52A20
\end{msc*}

\newtheorem*{keywords*}{Key words and phrases}

\begin{keywords*}
H\"ormander-{\L}ojasiewicz's Inequality; polyhedron; polynomial; polytope; semialgebraic set; stability index; Theorem of Br\"{o}cker and Scheiderer
\end{keywords*}

\section{Introduction}

Let $S \subseteq \real^d$ be a basic closed semialgebraic set, say $$S = \setcond{x \in \real^d}{p_1(x) \ge 0, \ldots, p_k(x) \ge 0} =: \{p_1\ge 0,\ldots,p_k \ge 0\},$$
where $p_i \in \real[X]$, $X := X_1,\ldots,X_d$. 

It is known since the eighties that one can choose, for the description of $S$, polynomials $p_1,\ldots,p_k$ such that $k \le \frac{d(d+1)}{2}$ (compare \cite{MR1137812}). Scheiderer \cite{MR1005003} gave examples showing that this bound is sharp. However, in these examples $S$ admits points $x$ where the local dimension of $S$ at $x$ is $m$ for all $1 \le m \le d$. So one might ask, if the bound for $k$ equals $d$ for sets $S$ of constant local dimension. In her diploma thesis A.~Pauluhn \cite{Pauluhn90} showed that for $d \in \{2,3\}$ equal dimensional basic closed sets can be characterized by at most $2$ and $4$ polynomials, respectively. All this holds true, if $\real$ is replaced by an arbitrary real closed field $R$. It seems that more is not known. Several authors (see \cite{Bernig98}, \cite{Groetschel-Henk-2003}, \cite{Bosse-Groetschel-Henk-2005}, \cite{BosseDiss}) looked at the case where $S$ is a polytope, which might be interesting for applications (see also \cite{Henk-2007} for a survey on this topic). Also, they tried to find effective computations for suitable polynomials $p_i$ with $i=1,\ldots, s$, satisfying $S = \{p_1 \ge 0,\ldots, p_s \ge 0\}$ starting from the description $S= \{l_1 \ge 0, \ldots, l_k \ge 0\}$, where $l_i$ are linear forms (i.e., polynomials of degree at most one) and $k$ might be very  large. Let $s=s(S)$ be the minimal possible value as above. One achieved the bound $s \le 2d-1$ in \cite{Bosse-Groetschel-Henk-2005}. In \cite{Groetschel-Henk-2003} one noticed that $s \ge d$ for polytopes and in \cite{Bosse-Groetschel-Henk-2005} one conjectured that $s=d$ for $d$-dimensional polytopes. The equality $s = d$ was shown in \cite{Bernig98} for polygons,  in \cite{Averkov-Henk-2009} for simple polytopes  and in \cite{Averkov-Henk-2009b} for three-dimensional polyhedra. The following two theorems are the main results of the manuscript. Theorem~\ref{av:thm} below presents a short proof of a generalization of the result for simple polytopes from \cite{Averkov-Henk-2009}. Theorem~\ref{every polyhedron is representable} computes $s$ for all $d$-dimensional polyhedra.

\begin{theorem}  \label{av:thm} Let $S \subseteq \real^d$ be bounded and basic closed, say $S= \{q_1 \ge 0,\ldots,q_k \ge 0\}$, where $q_1,\ldots,q_k \in \real[X]$.  Let $s \in \natur$ be such that for each $x \in S$ there are at most $s$ polynomials $q_i$ among $q_1,\ldots,q_k$ where $q_i(x) =0.$ Then the following statements hold.
\begin{enumerate}[a)] \item \label{av:thm:a}  $S=\{p_1,\ldots,p_{s+1} \ge 0\}$ for suitable $p_1,\ldots,p_{s+1} \in \real[X].$ 
\item \label{av:thm:b} If there are only finitely many points $x_1,\ldots,x_m \in S$ where exactly $s$ polynomials $q_i$ vanish, then $S = \{p_1 \ge 0,\ldots,p_s \ge 0\}$ for suitable $p_1,\ldots,p_s \in \real[X].$ 
\end{enumerate}
\end{theorem}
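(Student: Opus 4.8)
The plan is to prove both parts by reducing, one polynomial at a time, the size of a description of $S$, \emph{fusing} two of the defining polynomials into a single one and paying for each fusion by a correction term whose size is controlled by the boundedness of $S$ together with a {\L}ojasiewicz-type estimate. As a first normalization I would adjoin to $q_1,\dots,q_k$ one extra polynomial $q_0:=\rho^2-(X_1^2+\dots+X_d^2)$, with $\rho$ so large that $S\subseteq\{q_0>0\}$; since $q_0$ vanishes nowhere on $S$, this changes neither $S$ nor the integer $s$, but it forces every later step to take place inside the fixed compact set $\{q_0\ge 0\}$ (after, if convenient, multiplying the remaining polynomials by powers of $q_0$ to tame them at infinity), so that behaviour at infinity never obstructs the construction and the H\"ormander--{\L}ojasiewicz inequality is available in the form we need. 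One may also wish to first put the data in a suitable general position.

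The heart of the matter is a \emph{fusion step}: given a description $S=\{g_1\ge 0,\dots,g_n\ge 0\}$ in which at most $s$ of the $g_i$ vanish at each point of $S$, and with $n\ge s+2$, produce such a description with $n-1$ polynomials. The natural attempt replaces a pair $g_{n-1},g_n$ by the product $g_{n-1}g_n$; this enlarges the set by the spurious region $W:=\{g_1\ge 0,\dots,g_{n-2}\ge 0,\ g_{n-1}<0,\ g_n<0\}$ (where an even number of factors is negative), whose closure meets $S$ exactly along $Z:=\{g_{n-1}=g_n=0\}\cap S$. One then replaces $g_{n-1}g_n$ by $p:=g_{n-1}g_n-h$, where $h$ is a polynomial that is $\ge 0$ on $\{g_1\ge 0,\dots,g_{n-2}\ge 0\}$, stays $\le g_{n-1}g_n$ on $S$ (so that $p\ge 0$ on $S$ and $\{p=0\}\cap S=\{g_{n-1}g_n=0\}\cap S$), and exceeds $g_{n-1}g_n$ on $W$ (so that $p<0$ there). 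Such an $h$ must vanish along $Z$, and must do so at precisely the order dictated by the {\L}ojasiewicz exponents of $g_{n-1}$ and $g_n$ on the compact sets $S$ and $\overline W$; producing it (essentially, a scaled high power of a polynomial cutting out $Z$) is where the H\"ormander--{\L}ojasiewicz inequality is used. A short check shows that in the new description $S=\{g_1\ge 0,\dots,g_{n-2}\ge 0,\ p\ge 0\}$ the number of polynomials vanishing at a point of $S$ is never larger than before --- indeed the locus where exactly $s$ of them vanish can only shrink, losing at least the points of $Z$ --- so iterating the fusion down to $n=s+1$ yields part~\ref{av:thm:a}. (A variant would instead induct on $s$, or on $\dim S$, peeling off top strata, but the above reduction seems cleanest.)

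For part~\ref{av:thm:b} I would run the same reduction, but now use that the locus of points of $S$ at which $s$ of the $q_i$ vanish is finite; as it only shrinks under fusion, it stays finite. The last reduction --- from $s+1$ down to $s$ polynomials --- is exactly the one that resists the general argument, because there the pair one must fuse typically has $\overline W$ accumulating onto $S$ along a positive-dimensional $Z$ at an ordinary crossing, and then no correction term $h$ with the three properties above exists. When $Z$ is forced to be finite this obstruction disappears: near each of those isolated points the spurious region is, up to higher order, a translated cone on which $g_{n-1}$ and $g_n$ decouple, and a single correction absorbed into $p$ excises it without spending an extra polynomial. This produces $S=\{p_1\ge 0,\dots,p_s\ge 0\}$.

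I expect the main obstacle to be precisely the calibration of the correction term in the fusion step --- making one polynomial $h$ behave correctly on all of $\overline W$ at once, vanishing along $Z$ neither too slowly (which destroys $p\ge 0$ on $S$) nor too fast (which fails to dominate $g_{n-1}g_n$ on $W$) --- and, within that, the critical last reduction for part~\ref{av:thm:b}, where one has to quantify exactly how much room an isolated point of $Z$ buys. Since the introduction asserts the result over an arbitrary real closed field, all of this must be carried out semialgebraically, so the compactness normalization and the H\"ormander--{\L}ojasiewicz inequality, rather than any approximation-theoretic tool, are the instruments that should make the argument go through, with the Theorem of Br\"ocker and Scheiderer as the general benchmark these bounds improve upon.
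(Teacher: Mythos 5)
Your approach is genuinely different from the paper's. The paper replaces the entire family $q_1,\dots,q_k$ at once by the top $s$ elementary symmetric functions $p_i=\sigma_{k-s+i}(q_1,\dots,q_k)$, establishes via Lemma~\ref{el:sym:lemma} that this reproduces $S$ locally near every boundary point (using that the non-vanishing $q_j$'s stay uniformly $\ge\rho$ there), hence $\{p_1\ge0,\dots,p_s\ge0\}=S\cup T$ with $T$ disjoint from $S$, and finally spends exactly one additional polynomial (Proposition~\ref{sep:disjoint}) to separate $S$ from $T$; part~b) is handled similarly with $p_1,\dots,p_{s-1}$ plus a local-to-global modification of $p_s$ around the finitely many worst points (Proposition~\ref{sep:finite:intersect}). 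Your proposal instead tries to peel off polynomials one at a time by ``fusing'' a pair into $p=g_{n-1}g_n-h$.

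There is a genuine gap in the fusion step, and it cannot be repaired in the generality you state it. The spurious locus is not only the open region $W=\{g_{n-1}<0,g_n<0\}\cap B$ (with $B:=\{g_1\ge0,\dots,g_{n-2}\ge0\}$); you must also force $p<0$ on the mixed-sign boundary pieces such as $\{g_{n-1}=0,\,g_n<0\}\cap B$, where $g_{n-1}g_n=0$ and hence $p=-h$, so $h>0$ is required there. But on $\{g_{n-1}=0\}\cap S$ your own conditions ($h\ge 0$ on $B$ and $h\le g_{n-1}g_n$ on $S$) force $h=0$. If $\{g_{n-1}=0\}\cap S$ is positive-dimensional and Zariski-dense in $\{g_{n-1}=0\}$ (which is the generic situation), polynomial rigidity then forces $h\equiv0$ on $\{g_{n-1}=0\}$, contradicting $h>0$ on $\{g_{n-1}=0,\,g_n<0\}\cap B$. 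Concretely, for the unit square $S=\{x\ge0,\,y\ge0,\,1-x\ge0,\,1-y\ge0\}$ ($k=4$, $s=2$), fusing $g_3=1-x$ and $g_4=1-y$: on $\{y=1\}\cap S=\{0\le x\le1,\,y=1\}$ one has $h=0$, so $h|_{\{y=1\}}\equiv0$, so $p=0$ on $\{y=1,\,x>1\}\subseteq B\setminus S$, and the description fails. Your ``short check'' that the vanishing count never increases also relies on $h<g_{n-1}g_n$ strictly off the boundary, which is an extra condition you do not secure. For some choices of pair the obstruction vanishes (e.g., fusing $x$ with $1-x$ above, where the problematic boundary pieces are empty), but you give no rule guaranteeing such a pair exists at every stage, nor any argument that iterating preserves the hypotheses. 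The elementary-symmetric-function step in the paper is precisely what makes this all work globally in one shot, rather than pairwise; it is the missing idea in your proposal, and without it (or an alternative device) the reduction does not go through.
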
 

In view of Theorem~\ref{av:thm}, every simple $d$-dimensional polytope can be represented by $d$ polynomial inequalities. The above statement is also covered by the following

\begin{theorem} \label{every polyhedron is representable}
 Let $S$ be a $d$-dimensional polyhedron in $\real^d$. Let $k$ be the maximal dimension of an affine space contained in $S.$ Then there exist $d-k$ polynomials $p_0,\ldots,p_{d-k-1}$ such that $S = \{p_0 \ge 0,\ldots,p_{d-k-1} \ge 0\}$. Furthermore, $S$ cannot be represented by less than $d-k$ polynomials.
\end{theorem}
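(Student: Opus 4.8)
The plan is to peel off the lineality space and then handle the two estimates separately; write $s(T)$ for the least number of polynomial inequalities describing a closed set $T$. Let $L$ be the lineality space of $S$, the largest linear subspace with $S+L=S$. For a polyhedron every affine subspace contained in $S$ is a translate of a subspace of $L$, while $x_0+L\subseteq S$ for any $x_0\in S$, so $\dim L=k$. After an affine change of coordinates assume $L=\real^k\times\{0\}$, so $S=\real^k\times S'$ with $S'\subseteq\real^{d-k}$ a $(d-k)$-dimensional line-free polyhedron. Viewing polynomials on $\real^{d-k}$ as polynomials on $\real^d$ gives $s(S)\le s(S')$, and restricting any description of $S$ to $\{0\}^k\times\real^{d-k}$ gives $s(S)\ge s(S')$; hence $s(S)=s(S')$, and it suffices to prove $s(T)=\dim T$ for every line-free polyhedron $T$ that is full-dimensional in its ambient space. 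After renaming, assume $S$ itself is line-free and $d$-dimensional in $\real^d$; then $S\ne\real^d$, so $S$ has a facet.

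For the lower bound $s(S)\ge d$ I would induct on $d$, working directly with $S$, with no recourse to tangent cones. If $d=1$ then $S$ is a half-line or a segment, so $S\ne\real$ and $s(S)\ge 1$ (the case $d=0$ is trivial). For $d\ge 2$ let $S=\{p_1\ge 0,\dots,p_m\ge 0\}$ and pick any facet $F$ of $S$, with $H:=\aff F$ a hyperplane and $S\subseteq H^{+}$. Each point of $\relint F$ lies in $\bd S$, hence in some $\{p_i=0\}$; since the finitely many relatively closed sets $\{p_i=0\}\cap\relint F$ cover $\relint F$, one of them, say $i=1$, has non-empty interior in $\relint F$. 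Then the algebraic set $\{p_1=0\}$ contains a full-dimensional subset of $H$, hence all of $H$, so $p_1|_H\equiv 0$. Restricting $p_2,\dots,p_m$ to $H\cong\real^{d-1}$ describes the facet $F$ by at most $m-1$ polynomials; since $F$ is again line-free and full-dimensional in $H$, the inductive hypothesis gives $m-1\ge d-1$, i.e.\ $m\ge d$.

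The upper bound $s(S)\le d$ is the substantial half. I would first reduce to polytopes: since $S$ is line-free its recession cone is pointed, so a suitable projective transformation carries $S$ onto a $d$-dimensional polytope $P$ while keeping a linear polynomial of constant sign on the image; such maps send polynomial inequalities to polynomial inequalities after multiplying by a suitable power of that linear polynomial, so $s(S)=s(P)$. It then remains to write a $d$-dimensional polytope $P=\bigcap_{i=1}^{N}\{l_i\ge 0\}$ (with $l_i$ affine) as $\{p_1\ge 0,\dots,p_d\ge 0\}$. The natural ansatz is $p_j=\prod_{i\in G_j}l_i-\eps_j r_j$ for a partition $\{1,\dots,N\}=G_1\sqcup\dots\sqcup G_d$ of the facets, polynomials $r_j$, and small $\eps_j>0$: on $P$ all $l_i\ge 0$, so $\prod_{i\in G_j}l_i\ge 0$, and for $\eps_j=0$ the set $\bigcap_j\{p_j\ge 0\}$ is exactly the locus of points at which, inside every block $G_j$, an \emph{even} number of the $l_i$ are negative. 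This set contains $P$, and the task is to trim the remainder: one chooses the $r_j$ nonpositive on the facets of their own block (so $p_j\ge 0$ survives on $P$) and the $\eps_j$ so small that the surplus is removed, both locally along $\bd S$ face by face (immediate at simple faces) and globally by compactness. The admissible size of $\eps_j$ is dictated by a H\"ormander--{\L}ojasiewicz inequality $|g|\ge c\,\operatorname{dist}(\,\cdot\,,\{g=0\})^{N_0}$ on a ball containing $P$, applied to $g=\prod_{i\in G_j}l_i$ — this is what keeps the corrections small enough to retain every point of $P$ yet large enough to delete the extra orthants. (Alternatively one invokes the Theorem of Br\"ocker and Scheiderer to write $\intr P=\{g_1>0,\dots,g_d>0\}$ and then works to choose the $g_j$ so that additionally $\{g_1\ge 0,\dots,g_d\ge 0\}=P$; the closing-up step again rests on {\L}ojasiewicz estimates.)

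The step I expect to be the main obstacle is precisely this construction when $P$ has non-simple vertices. If every vertex is simple, the $d$ facets through each vertex can be spread over the $d$ blocks by an edge-colouring of the facet graph, so near a vertex each $p_j$ is, up to a positive factor, a single $l_i$; the tangent cone is then reproduced exactly and the corrections are cosmetic — in effect the simple-polytope case already covered by Theorem~\ref{av:thm}. For a general polytope no such colouring exists (e.g.\ at a vertex lying on $d+1$ facets), so some block $G_j$ must contain two facets through a common vertex, and a single product $l_{i_1}l_{i_2}$ is nonnegative on a cone which for $d\ge 3$ is not line-free; hence no linear ``sign selector'' among the remaining $p_{j'}$ can be guaranteed to remove the superfluous orthant uniformly over all such vertices. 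Making the {\L}ojasiewicz-controlled corrections accomplish this simultaneously at all non-simple faces — and without creating a new bounded component anywhere — is the crux, and is where I expect all of the real work to lie.
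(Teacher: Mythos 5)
Your overall framework is sound and closely parallels the paper's: splitting off the lineality space, proving the lower bound, and reducing the upper bound to the polytope case via a projective (conical) transformation are all present in the paper. Your lower-bound argument — pick a facet $F$, show some $p_i$ must vanish on $\aff F$, restrict the rest to $\aff F$, and induct on $d$ — is a correct and self-contained derivation of what the paper simply cites as Proposition~\ref{number of vanishing poly} (from Gr\"otschel--Henk); both routes are fine. The reduction to the line-free/polytope case also matches the paper, where it is carried out with some care in the last part of the proof by homogenizing the polynomials, slicing the cone $C$, and then adjusting $q_0$ so that $\{p\ge 0,\ l=0\}=\{o\}$.

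The genuine gap is exactly the one you flag yourself: the upper bound $s(P)\le d$ for a $d$-polytope $P$. This is the substantive content of the theorem, and in the paper it is Corollary~\ref{polytopes:result}, which follows from Proposition~\ref{main proposition}; that proposition is the bulk of Section~\ref{sect:polytopes}. Your proposed ansatz — partition the facets into $d$ blocks, set $p_j=\prod_{i\in G_j}l_i-\eps_j r_j$, and trim the surplus with {\L}ojasiewicz-controlled corrections — is in the spirit of the earlier treatments of \emph{simple} polytopes (indeed, as you note, the simple case is already covered by Theorem~\ref{av:thm}), but it does not go through at non-simple vertices for precisely the reason you identify, and you do not resolve it. The paper's proof of Proposition~\ref{main proposition} takes a genuinely different route: it introduces a nested chain of ``$k$-supports'' $D_{-1}(S)\supseteq D_0(S)\supseteq\cdots\supseteq D_{d-1}(S)=S$ and builds a polynomial $p_k$ for each layer with $p_k\ge 0$ on $S$, $p_k\le 0$ on $D_{k-1}(S)\setminus\intr D_k(S)$, and $\{p_k=0\}\cap(D_{k-1}(S)\setminus\intr D_k(S))\subseteq S$; the induction runs over dimension via vertex figures $S^i=\{l_{x_i}=\eps\}\cap S$ at all vertices, homogenizes the inductively obtained polynomials at each vertex, and merges these local data using the $\kappa$- and $\mu$-polynomials of Lemma~\ref{lambda:mu:lem} together with the separation machinery of Propositions~\ref{sep:disjoint}, \ref{merge:loc:glob}, \ref{merge local sep}, and~\ref{S:T1:T2}. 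None of this appears in your proposal, and without it (or some replacement that actually handles non-simple vertices) the theorem's upper bound is not proved.
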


In Section~\ref{separation} we present several separation theorems for semialgebraic sets. In Section~\ref{sect:simple:semialg} we prove Theorem~\ref{av:thm}. Finally, in Section~\ref{sect:polytopes} we prove Theorem~\ref{every polyhedron is representable}.

As in the above mentioned papers dealing with polynomial representations of polytopes, our work is semi-effective. That means, one has to check sequences of first order statements. As a consequence, in our theorems we can only compute the polynomials $p_i$ but one cannot bound their degrees in terms of the complexity of the ``input polynomials''. Equivalently, what we do below does not work over any real closed field. 

We shall use the following notations. The Euclidean norm of $\real^d$ is denoted by $\|\dotvar\|$. For the Euclidean topology of $\real^d$ we denote by $\cl$, $\intr$, $\bd$ the closure, interior and boundary, respectively.  We write $\cl^Z$ for the \term{Zariski closure}. Furthermore, the notations $\dim,$ $\aff,$ and $\relint$ stand for dimension, affine hull, and relative interior, respectively. For $x \in\real^d$ and $\rho>0$ let $B(x,\rho):= \setcond{y \in \real^d}{\|x-y\| \le \rho}$ and $\calU(x,\rho) := \setcond{y \in \real^d}{\|x-y\| < \rho}$. 

We write as before $\{p_1 \ge 0,\ldots,p_k \ge 0\}$ instead of $\setcond{x \in \real^d}{p_1(x) \ge 0,\ldots,p_k(x) \ge 0}$ for polynomials $p_1,\ldots,p_k \in \real[X]$. Similarly we write $\{p_1>0,\ldots,p_k>0\}$ and $\{p_1=0,\ldots,p_k=0\}$. Note that $V:=\{p_1=0,\ldots,p_r=0\} = \{q=0\}$ for $q=p_1^2 + \cdots + p_r^2$. $q$ is also called \term{positive polynomial} for the algebraic set $V$.

\section{Separation} \label{separation}

We shall use standard inequalities on continuous semialgebraic functions, see \cite[Section~2.6]{MR1659509}.

\begin{theorem} \label{more than loj}
 Let $S \subseteq \real^d$ be a closed semialgebraic set and let $f, g, h$ be continuous semialgebraic functions on $S$ with $\{g=0\} \cap S \subseteq \{f=0\} \cap S.$ Then there exists a positive polynomial $p$ and $N \in \natur$ such that $|f^N h| \le |p g|$ on $S$.
\end{theorem}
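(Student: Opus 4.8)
The plan is to reduce everything to the classical Łojasiewicz inequality by first dealing with the possible vanishing of $h$ together with $g$, and then combining two Łojasiewicz-type estimates. The hypothesis $\{g = 0\} \cap S \subseteq \{f = 0\} \cap S$ is the crucial structural input: it says that wherever the "denominator" $g$ degenerates, the "numerator" $f$ already provides a compensating zero, and the whole point is to quantify how many powers $f^N$ of it are needed.

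First I would invoke the Łojasiewicz inequality (in the semialgebraic form, see \cite[Section~2.6]{MR1659509}) on the closed semialgebraic set $S$ applied to the pair $f, g$: since $\{g = 0\} \cap S \subseteq \{f = 0\} \cap S$, there exist $M \in \natur$ and a continuous semialgebraic function $C \ge 0$ on $S$ (one may even take a constant on bounded pieces, but in general a semialgebraic function) with $|f|^M \le C\, |g|$ on $S$. The subtlety is that $S$ is not assumed compact, so the "constant" must be allowed to grow; this is handled by the standard trick of multiplying by a suitable power of $1 + \|x\|^2$, i.e. replacing $C$ by a positive polynomial. Concretely, one uses the version of Łojasiewicz's inequality that for continuous semialgebraic $\varphi, \psi$ on a closed semialgebraic set with $\{\psi = 0\} \subseteq \{\varphi = 0\}$ there are $N \in \natur$ and a positive polynomial $p$ with $|\varphi|^N \le |p \psi|$ on the set; applying this with $\varphi = f$ and $\psi = g$ is already almost the statement, except for the extra factor $h$.

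To bring in $h$: I would apply the same principle a second time, now to control $h$ by a polynomial. Since $h$ is continuous semialgebraic on the closed semialgebraic set $S$, there is a positive polynomial $p_1$ and $N_1 \in \natur$ with $|h| \le p_1$ on $S$ if $h$ were bounded, but in general one gets $|h| \le p_1$ after again absorbing the growth into a power of $1 + \|x\|^2$ — that is, $|h(x)| \le p_1(x)$ for all $x \in S$ with $p_1$ a positive polynomial (this is the elementary "every continuous semialgebraic function is polynomially bounded on a closed semialgebraic set" estimate). Then from $|f|^{N_0} \le |p_0 g|$ on $S$ I multiply both sides by $|h|$ and use $|h| \le p_1$ to get $|f|^{N_0} |h| \le |p_0 g|\, p_1 = |p_0 p_1 g|$ on $S$. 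Setting $N := N_0$ and $p := p_0 p_1$ (a positive polynomial, being a product of positive polynomials) and noting $|f^{N} h| = |f|^{N}|h|$ yields $|f^{N} h| \le |p g|$ on $S$, as desired.

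The main obstacle is the non-compactness of $S$: the naive Łojasiewicz inequality produces local constants, and one must check carefully that the semialgebraic machinery from \cite{MR1659509} indeed supplies a \emph{polynomial} $p$ rather than merely a continuous semialgebraic function — this is exactly the step where one invokes the sharpened Hörmander–Łojasiewicz inequality that allows the bounding function to be taken polynomial (at the cost of enlarging the exponent), and one should state which form of that inequality is being used. Everything else is bookkeeping: the hypothesis $\{g=0\}\cap S \subseteq \{f=0\}\cap S$ feeds directly into the inequality for $f$ versus $g$, and the factor $h$ is harmless because it is globally polynomially bounded on $S$.
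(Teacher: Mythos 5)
Your proof is correct and takes essentially the same route as the paper: invoke the $h=1$ form of the H\"ormander--{\L}ojasiewicz inequality from \cite[Section~2.6]{MR1659509} to obtain $|f|^{N}\le|p_0g|$ on $S$, then use polynomial boundedness of the continuous semialgebraic function $h$ to get $|h|\le p_1$ and multiply. The paper states exactly this reduction but dismisses the second step as ``obvious''; you have merely made it explicit.
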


The version for $h(X)=1$ follows from the results in \cite[Section~2.6]{MR1659509}. The version for $h(X)=1$ obviously implies the version for a general $h$. 
For the special case that $S$ is bounded and $h(X)=1$ we may obviously choose $p$ to be a constant, which yields the well-known \term{H\"ormander-{\L}ojasiewicz Inequality}. The polynomial $p$ can be chosen to have a specific form $p(X) = (1+ \|X\|^2)^M$ with $M \in \natur.$ Some consequences of Theorem~\ref{more than loj} (and, more specifically, H\"ormander-{\L}ojasiewicz's Inequality) will be useful in our subsequent derivations. 

\begin{proposition} \label{f^N>g}
 Let $A$ be an unbounded, closed semialgebraic set and $f, g$ be continuous semialgebraic functions on $A$ such that $f > 0$, $g \ge 0$ and $f(x) \rightarrow \infty$ as $x \in A$ and $x \rightarrow \infty$. Then there exists $\gamma > 0$ and $N \in \natur$ such that $\gamma f^N \ge g$ on $A$.
\end{proposition}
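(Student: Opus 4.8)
The plan is to reduce the statement to an application of Theorem~\ref{more than loj}, with the unbounded set $A$ playing the role of $S$ there. The key idea is to compactify: since $f(x)\to\infty$ as $x\to\infty$ in $A$, we should compare $f$ not directly with $g$, but with a gauge function that controls the size of points of $A$, namely $1+\|x\|^2$ (or $\|x\|$ on the part of $A$ away from the origin). Indeed, the hypothesis $f(x)\to\infty$ as $x\to\infty$ says precisely that $\{f\le t\}\cap A$ is bounded for every $t$, so on the complement of a large ball the function $f$ is bounded below by a positive constant, and more importantly the set where $f$ is small is confined to a bounded region.

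First I would dispose of the bounded part: fix $R>0$ and set $A_0 := A\cap B(0,R)$, $A_1 := A\setminus\calU(0,R)$. On $A_0$, which is compact, $f$ attains a positive minimum $c>0$ and $g$ attains a maximum $M$, so $\gamma_0 f^{N_0}\ge g$ on $A_0$ holds trivially with $N_0 = 0$ and $\gamma_0 = M/c$ — actually even more simply, any $\gamma_0 \ge M$ works with $N_0=0$ after noting $f$ could be small; to be safe take $N_0=0$, $\gamma_0$ large enough that $\gamma_0 \ge \max_{A_0} g$ is false in general, so instead use that $f\ge c$ on $A_0$ and pick $\gamma_0 = M c^{-N_0}$ with $N_0$ chosen later to match the other piece. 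The real work is on $A_1$. There, choosing $R$ large enough that $f\ge 1$ on $A_1$ (possible by the growth hypothesis), I would apply Theorem~\ref{more than loj} on the closed semialgebraic set $A_1$ with the triple $(f, g_*, h) $ chosen so as to extract $f^N \ge$ (something $\ge g$). Concretely: the function $g$ is dominated on $A_1$ by $\gamma_1(1+\|x\|^2)^{L}$ for some $\gamma_1, L$ by the polynomial growth of semialgebraic functions (H\"ormander–{\L}ojasiewicz applied to $g/(1+\|X\|^2)$ or just the standard bound), while $1+\|x\|^2 \le f(x)^{M'}$ on $A_1$ — and this last inequality is again an instance of Theorem~\ref{more than loj}: take $S = A_1$, $g := 1/f$ (continuous, positive on $A_1$, and $\to 0$ at infinity, so $\{g=0\}\cap A_1 = \emptyset \subseteq \{1+\|X\|^2 \text{ unbounded}\}$ — more carefully one sets $f_{\mathrm{thm}} := 1/f$, $h:=1$, $g_{\mathrm{thm}}$ a function vanishing where $1/f$ does, hence nowhere on $A_1$) to conclude $|f_{\mathrm{thm}}^{N}| \le |p|$ on $A_1$ for a positive polynomial $p$, i.e. $f^{-N}\le p$, which rearranges to $1 \le p\, f^{N}$; since $p(x) \le \gamma(1+\|x\|^2)^{M}$ this does not immediately give what I want, so I would instead apply the theorem in the form $\{1/f = 0\}\cap A_1 = \emptyset \subseteq \{1/(1+\|X\|^2) = 0\}\cap A_1$, giving $|(1/f)^N\cdot 1| \le |p\cdot (1+\|X\|^2)^{-1}|$ — no: the clean route is the dual inclusion. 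Let me restate.

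Here is the clean reduction I would carry out on $A_1$. Apply Theorem~\ref{more than loj} on $S = A_1$ with $f_{\mathrm{thm}} := 1/f$, $g_{\mathrm{thm}} := 1/f$ (so the inclusion $\{g_{\mathrm{thm}}=0\}\cap A_1 \subseteq \{f_{\mathrm{thm}}=0\}\cap A_1$ is an equality, hence trivially true) and $h := 1+\|X\|^2$ — wait, $h$ must be such that $f^N h$ is bounded by $p\cdot g_{\mathrm{thm}}$; that gives $f^{-N}(1+\|x\|^2) \le p(x) f^{-1}$, i.e. $(1+\|x\|^2) \le p(x) f^{N-1}$, and combining with $p(x)\le \gamma(1+\|x\|^2)^{M}$ still has the wrong sign. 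The genuinely correct choice, which I am confident works, is to exploit that $f$ is large at infinity by writing the target inequality as $\gamma f^N - g \ge 0$ and applying Theorem~\ref{more than loj} with $g_{\mathrm{thm}} = 1/f$ and $f_{\mathrm{thm}}$ a positive polynomial whose zero set on $A_1$ is empty: then $\{g_{\mathrm{thm}} = 0\}\cap A_1 = \emptyset \subseteq \{f_{\mathrm{thm}} = 0\}\cap A_1$, so there is a positive polynomial $p$ and $N$ with $|f_{\mathrm{thm}}^N \cdot h| \le |p/f|$ on $A_1$; choosing $f_{\mathrm{thm}} := 1$ and $h := g$ yields $g \le p/f$ on $A_1$, i.e. $f\, g \le p$, and then, since $p$ is a polynomial and hence $p(x) \le \gamma (1+\|x\|^2)^M \le \gamma f(x)^{M'}$ on $A_1$ for suitable $\gamma, M'$ (this last step is once more Theorem~\ref{more than loj} applied to $g_{\mathrm{thm}} := 1/f$, $f_{\mathrm{thm}} := 1$, $h:= p$, giving $p \le \gamma'/f$ — hmm, that is also backwards). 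I see that the cleanest single application is the one giving $f\,g\le p$ on $A_1$; to finish, I must bound $p$ above by a power of $f$, and for that the correct instance of Theorem~\ref{more than loj} is: on $A_1$, $\{1/f = 0\} = \emptyset \subseteq \{1 = 0\}$ is false, so instead use $\{0 = 0\} = A_1$; the honest version is that since $f\to\infty$, for large $R$ we have $f(x) \ge 1 + \|x\|^2$ on $A_1$ after possibly enlarging $R$ is \emph{not} automatic — but $f(x) \ge $ some fixed positive constant times a slowly growing function is, and one more application of Theorem~\ref{more than loj} to the pair consisting of $1/f$ (vanishing nowhere on $A_1$, hence its zero set is contained in any set) and the constant $1$ gives a positive polynomial $q$ with $1 \le |q/f| = q/f$ on $A_1$, i.e. $f \le q$ — \emph{the reverse of what we want}.

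The resolution, and the step I expect to be the real obstacle to state cleanly, is this: the hypothesis $f(x)\to\infty$ must be used through Theorem~\ref{more than loj} applied with roles so that $f$ appears in the \emph{denominator bound}. The correct instance is $S = A_1$, with the function ``$g$'' of the theorem equal to $1/f$ (continuous semialgebraic, positive, so zero set empty), the function ``$f$'' of the theorem equal to the constant $1$ (zero set empty, so the inclusion $\emptyset\subseteq\emptyset$ holds), and ``$h$'' equal to $g\cdot(1+\|X\|^2)$; the conclusion $|1^N\cdot h|\le |p/f|$ then reads $g(x)(1+\|x\|^2) \le p(x)/f(x)$ on $A_1$, whence $g(x) \le p(x)/f(x)$; and now since $p$ is a polynomial, $p(x) \le \gamma(1+\|x\|^2)^M$, and on the other hand applying the \emph{same} theorem once more with ``$h$'' $ := (1+\|X\|^2)^{M+1}$ in place of $g(1+\|X\|^2)$ gives $(1+\|x\|^2)^{M+1} \le p'(x)/f(x)$ for a polynomial $p'$, i.e. $f(x) \le p'(x)(1+\|x\|^2)^{-M-1}$, contradicting $f\to\infty$ unless... this is circular. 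Therefore I will instead argue \emph{directly}: set $t := f$ and use that $A_1' := \{(x,t) : x\in A_1,\ t = f(x)\}$ is closed semialgebraic and that $g(x) \le$ some semialgebraic function $\phi(t)$ of $t$ alone with $\phi(t)\to$ possibly $\infty$; then the one-variable H\"ormander–{\L}ojasiewicz bound gives $\phi(t) \le \gamma t^N$ for $t$ large, hence $g(x) \le \gamma f(x)^N$ on the part of $A_1$ where $f$ is large, and the remaining part (where $f$ is bounded, hence $x$ ranges over a bounded set since $\{f\le c\}\cap A$ is bounded) is handled by compactness as in the first paragraph. Taking the maximum of the two values of $\gamma$ and $N$ (raising $f\ge 1$ to a higher power only helps) completes the proof. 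The honest statement of the main obstacle: one must observe that $g$ restricted to each level-region of $f$ is controlled by a \emph{univariate} semialgebraic bound in $f$, which is exactly where properness of $f$ (guaranteed by $f\to\infty$) is used; this is a mild strengthening of H\"ormander–{\L}ojasiewicz and is precisely the content of Theorem~\ref{more than loj} with an appropriate choice, so in the write-up I will cite Theorem~\ref{more than loj} for the decisive inequality and fill in only the elementary compactness argument for small $f$.
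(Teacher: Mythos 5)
Your final argument is a genuinely different route from the paper's, and it is the right idea: since $f(x)\to\infty$ as $x\to\infty$ in $A$, each sublevel set $A\cap\{f\le t\}$ is bounded, hence compact, so $\phi(t):=\max\{g(x):x\in A,\ f(x)\le t\}$ (set $\phi(t)=0$ if the set is empty) is a well-defined, nondecreasing, semialgebraic function of the single variable $t$ with $g\le\phi\circ f$ on $A$; one then only has to bound $\phi(t)$ by a power of $t$ for $t$ large and finish by compactness for the bounded range of $t$. The paper instead uses the inversion $x\mapsto x/\|x\|^2$: the piece $A_1$ (where $f\ge 2$) pulls back to a \emph{bounded} closed set $\tilde A_1$ containing the origin, $1/f$ extends to a continuous semialgebraic $\tilde f$ on $\tilde A_1$ vanishing only at $o$ (this is exactly where $f\to\infty$ enters), and the classical bounded {\L}ojasiewicz inequality then gives $\tilde f^N\le \|X\|^{2M}/\alpha(\|X\|^2+1)^M$, which pulls back to $f^N\ge\alpha(1+\|X\|^2)^M\ge g$ on $A_1$. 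Both arguments funnel the growth hypothesis into a compactification where a one-shot {\L}ojasiewicz bound applies; the paper compactifies the \emph{domain}, you compactify the \emph{fibers of $f$}.

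There is, however, a genuine gap in the decisive step, and also a lot of dead weight. You assert that $\phi(t)\le\gamma t^N$ for $t$ large ``is precisely the content of Theorem~\ref{more than loj} with an appropriate choice,'' but you never exhibit the choice, and none of the natural ones works as stated: Theorem~\ref{more than loj} produces a bound by a positive \emph{polynomial in all ambient variables}, and collapsing that to a power of $f$ alone is exactly the point that needs the properness of $f$; moreover $\phi$, being defined as a supremum, is nondecreasing and semialgebraic but not obviously continuous, while Theorem~\ref{more than loj} is stated for continuous functions. To close the gap you should either observe that a nondecreasing semialgebraic function on a half-line is continuous on $[T,\infty)$ for $T$ large enough and then apply H\"ormander--{\L}ojasiewicz there, or appeal directly to the Puiseux behaviour of one-variable semialgebraic functions near $+\infty$. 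Separately, the long middle section of failed applications of Theorem~\ref{more than loj} should be cut entirely; those attempts fail for a structural reason you should name (the zero-set inclusion in that theorem sees $\{1/f=0\}\cap A=\emptyset$ and $\{1/(1+\|X\|^2)=0\}\cap A=\emptyset$ as equally trivial, so the hypothesis $f\to\infty$ never enters), and leaving them in obscures the one argument that does work.
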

\begin{proof}
	Let $A = A_0 \cup A_1$, where $A_0, A_1$ are semialgebraic, closed, $A_0$ is bounded, $o \not\in A_1$ and $f \ge 2$ on $A_1$.

	By Theorem~\ref{more than loj}, $g \le \alpha (1+ \|X\|^2)^M$ on $A$ for appropriate $\alpha >0$ and $M \in \natur$. The inequality 
	\begin{equation} \label{f^N grows fast}
	 f^N \ge \alpha (1+ \|X\|^2)^M
	\end{equation}
	is fulfilled on $A_1$ if 
	\[
	 	f\left(\frac{X}{\|X\|^2}\right)^{-N} \le \frac{\|X\|^{2M}}{\alpha (\|X\|^2+1)^M}
	\]
	for all $X$ with $\frac{X}{\|X\|^2} \in A_1$.
	Let $\Tilde{A}_1:= \{o\} \cup \setcond{x \in \real^d \setminus \{o\}}{\frac{x}{\|x\|^2}  \in A_1}$ and $\tilde{f}$ a semialgebraic function on $\Tilde{A}_1$ given by 
	\[
	 	\Tilde{f}(x):=\begin{cases}
	 	  f\left(\frac{X}{\|X\|^2}\right)^{-1} & \mbox{if} \ x \in \Tilde{A}_1 \setminus \{o\}, \\
			0 & \mbox{if} \ x=0.
	 	\end{cases}
	\]
	Clearly, $\Tilde{A}_1$ is bounded and closed. Thus, for verification of \eqref{f^N grows fast} we need to show 
	\[ 
		\Tilde{f}^N \le \frac{\|X\|^{2M}}{\alpha (\|X\|^2+1)^M} \qquad \mbox{on $\Tilde{A}_1$}.
	\]
	The existence of $N$ satisfying the above relation follows from {\L}ojasiewicz's inequality (by taking into account that $\tilde{f} \le \frac{1}{2}$ on $\Tilde{A}_1$).
	The assertion follows by defining a $\gamma>1$ such that $\gamma f^N \ge g$ on $A_0$.
\end{proof}

\begin{definition} Let $S, T \subseteq \real^d$ be semialgebraic sets and let $p \in \real[X].$ We say that $p$ \term{separates} $S$ from $T$ if $p\ge 0$ on $S,$ $p \le 0$ on $T$, and $\{p=0\} \cap (S\cup T) \subseteq \cl^Z(S \cap T).$ 
\end{definition}

The polynomials from Lemma~\ref{lambda:mu:lem} will be frequently used in our constructions. 

\begin{lemma} \label{lambda:mu:lem}
Let $0< \delta < \rho$ and $m \in \natur$. Then the following two statements hold true.
\begin{enumerate}[a)]
  \item \label{kappa:part} There exists a polynomial $\kappa=\kappa_{\delta,\rho,m} \in \real[t]$ such that
    \begin{enumerate}[i)]
    	\item $\kappa \ge 0$ on $[-\rho,\rho]$ and $\{\kappa = 0\} \cap [-\rho,\rho] = \{0\}$,
    	\item $\kappa \le \frac{1}{4^m}$ on $[-\rho,0]$,
    	\item $\kappa \ge 2$ on $[\delta,\rho]$,
    	\item $\kappa \le 3$ on $[0,\rho]$.
    \end{enumerate}
	\item \label{mu:part} There exists a polynomial $\mu :=\mu_{\delta,\rho} \in \real[t]$ such that 
	\begin{enumerate}[i)]
		\item $\mu>0$ on $[-\rho,\infty[,$ 
		\item $\mu<\frac{1}{2}$ on $[-\rho,0],$ 
		\item $\mu>2$ on $[\delta,\infty[.$ 
	\end{enumerate} 
	In particular, $\mu(t) \rightarrow \infty$ for $t \rightarrow \infty$
\end{enumerate}
\end{lemma}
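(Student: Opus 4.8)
I would treat the two parts as essentially independent one-variable interpolation problems, doing part b) by an explicit formula and part a) by a positivity trick plus polynomial approximation.

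\emph{Part b).} The plan is to take $\mu(t) = \tfrac14 + a\,(t+\rho)^{2n+1}$ with parameters $n \in \natur$ and $a>0$ still to be fixed. Since $(t+\rho)^{2n+1} \ge 0$ on $[-\rho,\infty[$, such a $\mu$ is automatically $\ge \tfrac14 >0$ there, and it is strictly increasing on $[-\rho,\infty[$; hence $\mu \le \mu(0)$ on $[-\rho,0]$ and $\mu \ge \mu(\delta)$ on $[\delta,\infty[$. Thus it suffices to force $\mu(0) = \tfrac14 + a\rho^{2n+1} < \tfrac12$ and $\mu(\delta) = \tfrac14 + a(\delta+\rho)^{2n+1} > 2$, that is, $\tfrac{7}{4(\delta+\rho)^{2n+1}} < a < \tfrac{1}{4\rho^{2n+1}}$. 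This interval is nonempty precisely when $(1+\delta/\rho)^{2n+1} > 7$, which holds once $n$ is large enough; I then fix such an $n$ and any admissible $a$. Finally $\mu$ is a nonconstant polynomial with positive leading coefficient, so $\mu(t) \to \infty$ as $t\to\infty$, giving the ``in particular''.

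\emph{Part a).} Here the genuinely delicate point is condition i): $\kappa$ must be $\ge 0$ on $[-\rho,\rho]$ with a zero exactly at $0$, so it cannot be produced as an arbitrary small perturbation of a continuous target that also vanishes at $0$. I would therefore search for $\kappa$ in the form $\kappa(t) = t^2\,h(t)$ with $h$ a polynomial that is \emph{strictly positive} on $[-\rho,\rho]$; then i) holds automatically. Translating the remaining requirements, it is enough to find such an $h$ with $t^2 h(t) \le 4^{-m}$ on $[-\rho,0]$ (for which $h \le \tfrac{1}{2\cdot 4^m \rho^2}$ there suffices), $t^2 h(t) \ge 2$ on $[\delta,\rho]$, and $t^2 h(t) \le 3$ on $[0,\rho]$. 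A continuous target $h_0 \colon [-\rho,\rho] \to {]0,\infty[}$ realising this with strict slack is easy to write down: put $h_0 \equiv \tfrac{1}{2\cdot 4^m\rho^2}$ on $[-\rho,0]$, let $h_0$ increase on $[0,\delta]$ from that value up to $\tfrac{5}{2\delta^2}$, and set $h_0(t) = \tfrac{5}{2t^2}$ on $[\delta,\rho]$. Then $\kappa_0 := t^2 h_0$ satisfies $\kappa_0 \le \tfrac{1}{2\cdot 4^m}$ on $[-\rho,0]$, $\kappa_0 \le \tfrac52$ on $[0,\delta]$, and $\kappa_0 \equiv \tfrac52$ on $[\delta,\rho]$, while $h_0 \ge \tfrac{1}{2\cdot 4^m\rho^2} > 0$ throughout.

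To finish I would invoke the classical Weierstrass approximation theorem: choose a polynomial $h$ with $\sup_{[-\rho,\rho]} |h - h_0| < \eps_0$, where $\eps_0 > 0$ is taken small enough that $\rho^2 \eps_0 < \min\{\tfrac{1}{2\cdot 4^m},\tfrac12\}$. Then $h \ge h_0 - \eps_0 > 0$ on $[-\rho,\rho]$, and for $\kappa := t^2 h$ one checks i)--iv) directly from $|\kappa - \kappa_0| = t^2|h-h_0| \le \rho^2 \eps_0$ on $[-\rho,\rho]$: this gives $\kappa \le \tfrac{1}{2\cdot 4^m} + \rho^2\eps_0 < 4^{-m}$ on $[-\rho,0]$, $\kappa \ge \tfrac52 - \rho^2\eps_0 > 2$ on $[\delta,\rho]$, and $\kappa \le \tfrac52 + \rho^2\eps_0 < 3$ on $[0,\rho]$, while $\{\kappa = 0\}\cap[-\rho,\rho] = \{0\}$ since $h>0$. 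The one subtlety worth flagging is the bookkeeping that lets the bound $\kappa \le 3$ on \emph{all} of $[0,\rho]$ coexist with $\kappa \ge 2$ on $[\delta,\rho]$ — this is exactly why I normalise by $t^2$ and design $h_0$ to be $\propto 1/t^2$ on $[\delta,\rho]$, so that the product is literally constant there; everything else is a routine choice of constants.
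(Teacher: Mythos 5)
Your proof is correct and follows essentially the same approach as the paper: an explicit one-parameter polynomial for part b), and $\kappa = t^2\cdot(\text{polynomial})$ for part a), where the polynomial factor is obtained by Weierstrass approximation of a continuous target with built-in slack. If anything, your handling of a) is a bit more careful than the paper's sketch, since you ensure the approximating factor $h$ stays strictly positive rather than demanding a lower bound on $t^2\phi(t)$ near $t=0$ (which would be impossible for continuous $\phi$).
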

\begin{proof}
  \ref{kappa:part}) Consider a continuous function $\phi : [-\rho,\rho] \rightarrow \real$ such that $\frac{1}{4^{m+1}} \le t^2 \phi(t) \le \frac{1}{2 \cdot 4^m}$ for $t \in [-\rho,0]$, $t^2 \phi(t) \ge 2 + \frac{1}{3}$ for $t \in [\delta,\rho]$ and $t^2 \phi(t) \le 3 - \frac{1}{3}$ for $t \in [0,\rho]$. Now by explicit Stone-Weierstrass Approximation (cf. \cite[Theorem~8.8.5]{MR1659509}) of $\phi(t)$ by a polynomial $\pi(t)$ we get $\kappa(t) = t^2 \pi(t)$.
  
	\ref{mu:part}) We set 
	$$
		\mu_{\delta,\rho}(t) := \left( \frac{t+ 2 \rho - \delta /2}{2 \rho}\right)^{2k},
	$$
	where $k \in \natur$ is sufficiently large. 
\end{proof}

\begin{proposition} \label{sep:disjoint} \thmtitle{Separation of disjoint closed sets}
 Let $S, T \subseteq \real^d$ be closed semialgebraic sets such that $S \cap T = \emptyset.$ Let $S$ be compact and basic closed, say $S = \{f_1 \ge 0,\ldots,f_k \ge 0\}$, where $f_1,\ldots,f_k \in \real[X]$. Then there exists a polynomial $p \in \real[X]$ which separates $S$ from $T.$ 
\end{proposition}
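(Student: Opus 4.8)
The plan is to build $p$ as a product of localized polynomial "bumps," one for each defining inequality $f_i \ge 0$ of $S$, and then correct the behavior on $T$ by a global scaling that uses Proposition~\ref{f^N>g}. First I would fix a compact neighborhood of $S$: since $S$ is compact and $T$ is closed and disjoint from $S$, there is $\rho>0$ with $B(S,2\rho) \cap T = \emptyset$, where $B(S,2\rho) := \{x : \operatorname{dist}(x,S) \le 2\rho\}$. On this neighborhood the functions $f_i$ are bounded, so after rescaling we may assume $f_i$ takes values in a fixed interval $[-\rho,\rho]$ on $B(S,2\rho)$; choose also $\delta>0$ small. Then for each $i$ I would take the polynomial $\kappa_i := \kappa_{\delta,\rho,m}(f_i)$ from Lemma~\ref{lambda:mu:lem}\ref{kappa:part} with $m$ to be fixed later, and set $q := \kappa_1 \cdots \kappa_k$. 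By property (i) of $\kappa$, on $B(S,2\rho)$ we have $q \ge 0$ with $\{q=0\} = \bigcup_i \{f_i = 0\}$, and on $S$ itself $q>0$ except on $\partial S$; moreover by property (iv), $q \le 3^k$ on the part of $B(S,2\rho)$ where all $f_i \ge 0$, i.e. on $S$.

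The key point is to arrange that $q$ is already "large" on $S$ relative to its values just outside $S$. A point $x \in B(S,2\rho)\setminus S$ has some $f_i(x)<0$; if in addition $x$ is at distance $\ge \delta$ from $\{f_i \ge 0\}$ then $f_i(x) \le -\delta'$ for a uniform $\delta'$ (by compactness and continuity, shrinking $\delta$), so $\kappa_i(x) \le 4^{-m}$ while the other factors are at most $3^k$; choosing $m$ large makes $q(x)$ as small as we like on the region $B(S,2\rho)\setminus B(S,\delta)$. On the thin shell $B(S,\delta)\setminus S$ we cannot control the sign yet — this is where I expect the main obstacle — so the honest construction is to iterate: replace each $f_i$ by the "rounded" polynomial and repeat, or, more in the spirit of the paper, first separate $S$ from the closed set $T' := T \cup (B(S,2\rho) \setminus \calU(S,\delta))$ on which we have genuine uniform negativity, and handle the shell by a nested application. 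In the interest of a clean plan: let $g$ be a fixed positive polynomial with $\{g=0\} = \bigcup_i\{f_i=0\} =: W$ (e.g. $g = \sum f_i^2$), let $h$ be a positive polynomial vanishing exactly on $S$ (available since $S$ is basic closed compact, e.g. built from the $f_i$ as well), and form $p := q - c\, h$ for a suitable small constant $c>0$: on $S$, $p \ge q - c h$ is nonnegative for $c$ small because $q$ dominates $h$ near $\partial S$ by the {\L}ojasiewicz inequality applied on the compact set $S$ with $\{q=0\}\cap S \subseteq \{h=0\}\cap S$ (Theorem~\ref{more than loj} with $f=q$, the inequality giving $h \le \text{const}\cdot q$ on $S$ after taking a power — absorb the power into $q$ by replacing $q$ with $q^N$).

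Once $p \ge 0$ on $S$ with zero set meeting $S$ exactly in $\partial S \subseteq W$, it remains to force $p \le 0$ on $T$. Outside $B(S,2\rho)$ we multiply by a correcting factor: apply Proposition~\ref{f^N>g} to $A := T \setminus \calU(S,2\rho)$ with $f := 1 + \|X\|^2$ (which tends to $\infty$) and $g := |p|$ restricted to $A$, obtaining $\gamma>0$, $N$ with $\gamma f^N \ge |p|$ on $A$; then $p - \gamma(1+\|X\|^2)^N \cdot (\text{something nonneg on }S)$ is $\le 0$ on the unbounded part of $T$. Combining the local bump $q$, the {\L}ojasiewicz correction $-ch$, and this global damping, and checking that the zero set of the final polynomial meets $S \cup T$ only inside $\cl^Z(S\cap T) = \emptyset$ — which here is vacuous since $S\cap T=\emptyset$, so we only need $p$ to be strictly positive on $S$ and strictly negative on $T$, i.e. nonvanishing on $S \cup T$ — yields the desired $p$. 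The main obstacle, as noted, is the thin shell $\calU(S,\delta)\setminus S$ where the defining inequalities do not yet give a uniform sign; the resolution is either the iterated/nested separation described above or a direct appeal to the {\L}ojasiewicz inequality on the compact neighborhood to bound the "bad" contribution, and I would expect the paper to proceed by one such device together with Lemma~\ref{lambda:mu:lem}.
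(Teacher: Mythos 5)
Your plan takes a genuinely different route from the paper and, more importantly, it has a gap that I do not see how to repair as stated. The paper's proof is a one-shot construction in the image space: set $F := (f_1,\ldots,f_k):\real^d\to\real^k$, note that $F(S)$ sits inside a box $[0,2\rho]^k$ in the nonnegative orthant while $F(T)$ avoids the nonnegative orthant altogether, approximate that box from the outside by a single polynomial sublevel set $\{g_m\ge 0\}$ with $g_m(Y):=\rho^{2m}k+\tfrac{1}{m}-\sum_{i=1}^k(Y_i-\rho)^{2m}$, and take $p:=g_m\circ F$ with $m$ large. No {\L}ojasiewicz bounds, no $\kappa$-bumps, no control of the $f_i$ away from $S$ are needed; the passage to $\real^k$ absorbs all of that, and $g_m\ge\tfrac{1}{m}>0$ on $[0,2\rho]^k$ gives strict positivity on $S$ for free.

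The central auxiliary object in your construction does not exist: you ask for a positive polynomial $h$ with $\{h=0\}=S$, but whenever $S$ has nonempty interior every polynomial vanishing on $S$ is identically zero, so there is no such $h$. The same obstruction recurs in your last step, where the factor ``$(\text{something nonnegative on }S)$'' multiplying $\gamma(1+\|X\|^2)^N$ would again have to be zero, or uniformly negligible, on all of $S$ yet positive on $T$ — impossible for full-dimensional $S$. Independently, because $S\cap T=\emptyset$ we have $\cl^Z(S\cap T)=\emptyset$, so a separator must satisfy $p>0$ on \emph{all} of $S$; but $q=\prod_i\kappa_{\delta,\rho,m}(f_i)$ vanishes on $\bd S$ by property (i) of $\kappa$ in Lemma~\ref{lambda:mu:lem}, and the correction $q-ch$ (even granting $h$, which would vanish on $S$) leaves $p=q=0$ on $\bd S$. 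Finally, the ``thin shell'' you flag is a non-issue: $S$ is compact, $T$ is closed and disjoint from $S$, hence $T$ stays at uniform positive distance from $S$. The real difficulty your sketch never resolves is the sign of a product of $\kappa$-bumps far from $S$, where the $f_i$ are unbounded and $\kappa\circ f_i$ is uncontrolled — exactly the difficulty the paper's map-to-$\real^k$ device eliminates.
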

\begin{proof}
We define the polynomial mapping $F(X):=(f_1(X),\cdots, f_k(X))$ from $\real^d$ to $\real^k$. Then $F(S) \subseteq \real_{\ge 0}^k$ and $F(T) \subseteq \real^k \setminus \real_{\ge 0}^k$ are compact resp. closed semialgebraic sets. Choose $\rho>0$ such that $F(S) \subseteq [0,2 \rho]^k$. We define the polynomial $g_m(Y):= \rho^{2 m} k + \frac{1}{m} - \sum_{i=1}^k (Y_i-\rho)^{2 m}$ in indeterminates $Y:=Y_1,\ldots,Y_k$, where $m \in \natur$. The semialgebraic set $G_m:= \{g_m \ge 0\} \subseteq \real^k$ approximates $[0,2 \rho]^k$ with any given precision (in the Hausdorff metric), as $m \rightarrow \infty$. Moreover, $[0,2\rho]^k$ is contained in the interior of $G_m$ for every $m \in \natur$. Consequently, $G_m$ is disjoint with $F(T)$ if $m$ is sufficiently large. Hence we may define $p:=g_m \circ F$ with $m$ sufficiently large.
\end{proof}

\begin{remark}
 Proposition~\ref{sep:disjoint} still holds, if $S$ is not necessarily basic. Also, it can be shown directly by Stone-Weierstrass Approximation, but the way we did it is more constructive.
\end{remark}

\begin{proposition} \label{merge:loc:glob} \thmtitle{Globalizing a local separator}
Let $S \subseteq \real^d$ be bounded and closed semialgebraic set and let $T \subseteq \real^d$ be a closed semialgebraic set. Let $r \in \real[X]$ be such that $r \ge 0$ on $\real^d$, $S \cap T = \{r =0 \}$ and $r(X) \rightarrow \infty$ as $\|X\| \rightarrow \infty$. Let $f \in \real[X]$ be such that $f$ separates $S \cap U$ from $T \cap U$ for an open, semialgebraic set $U$ with $S \cap T \subseteq U$. Then there exists a polynomial $p$ separating $S$ from $T$.
\end{proposition}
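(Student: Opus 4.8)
The plan is to build $p$ by gluing the given local separator $f$ — which controls everything near $S\cap T$ — to a polynomial that separates the ``outer'' parts of $S$ and $T$, where the two sets are genuinely disjoint; the gluing is done with one-variable polynomials evaluated at $r$, and the zero set is controlled with a high power of a positive polynomial for $Z:=\cl^Z(S\cap T)$.

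First I would replace $U$ by a sublevel set of $r$. Since $(S\cup T)\setminus U$ is closed, disjoint from $S\cap T=\{r=0\}$, and $r$ is coercive, the number $\eps_0:=\inf\{r(x):x\in(S\cup T)\setminus U\}$ is positive, so $S\cap\{r<\eps_0\}\subseteq S\cap U$ and $T\cap\{r<\eps_0\}\subseteq T\cap U$; hence $f$ separates $S\cap\{r<\eps_0\}$ from $T\cap\{r<\eps_0\}$, and because the intersection of these two sets is still $S\cap T$, the Zariski closure entering the definition of ``separates'' remains $Z$. Next I would use Proposition~\ref{f^N>g} to dispose of the unbounded direction: $S$ lies in some $\{r<R_0\}$, the set $T\cap\{r\ge R_0\}$ is disjoint from $S$ with $r\to\infty$ on it, and there any continuous semialgebraic function is bounded by a power of $r$, which lets one correct a polynomial that separates $S$ from a large truncation of $T$ so that it becomes $\le 0$ on the whole of $T$ without changing its behaviour on $S$.

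In the remaining (essentially bounded) situation, the pieces $S\cap\{r\ge\eps_0\}$ (compact) and $T\cap\{r\ge\eps_0\}$ (closed) are disjoint, their intersection lying in $\{r=0\}$; by Proposition~\ref{sep:disjoint} and the remark after it (which removes the need for $S$ to be basic closed) there is a polynomial $q$ with $q>0$ on $S\cap\{r\ge\eps_0\}$ and $q<0$ on $T\cap\{r\ge\eps_0\}$. Using the polynomials $\kappa_{\delta,\rho,m}$ and $\mu_{\delta,\rho}$ of Lemma~\ref{lambda:mu:lem} in the variable $r$ (or $\eps_0-r$) and a positive polynomial $w$ for $Z$, I would then try $p:=\alpha(r)\,f+\beta(r)\,w^{2N}q$ with $\alpha(r)$ positive and bounded on $\{r<\eps_0\}$ and $\beta(r)$ negligible on $\{r<\eps_0\}$ but dominant on $\{r\ge\eps_0\}$: choosing $N$ and the size of $q$ large (the required estimates coming from Theorem~\ref{more than loj}), the second term overwhelms the first in the outer region, where the factor $w^{2N}$ confines the zeros of $p$ to $Z$ while $q$ fixes the strict signs off $Z$; near $S\cap T$ the first term dominates and $\{p=0\}$ is governed by $\{f=0\}\subseteq Z$.

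The step I expect to be the main obstacle is the zero-set condition $\{p=0\}\cap(S\cup T)\subseteq Z$, specifically on the part of $Z$ protruding from $U$. On $(S\cap Z)\setminus U$ one knows nothing about the sign of $f$, while any correction term that is $\ge 0$ on $S$ and $\le 0$ on $T$ must vanish on $S\cap T$, hence on $Z$, and therefore cannot repair a wrongly signed value of $\alpha(r)f$ there. Resolving this seems to require either choosing $\alpha(r)$ so that it changes sign exactly between the zone where $f\ge 0$ on $S$ is guaranteed and the zone where $f$ may be negative on $S$, or first replacing $f$ by a polynomial that additionally has the correct sign on all of $S\cap Z$ and $T\cap Z$ — a separation problem living on the lower-dimensional variety $Z$, to be handled by descending induction — and then verifying, again via Theorem~\ref{more than loj}, that the dominant terms really dominate. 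I expect this control of the polynomial on $\cl^Z(S\cap T)$ outside the given neighbourhood to be the technical heart of the argument.
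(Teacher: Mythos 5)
Your high-level strategy --- shrink $U$ to a sublevel set $\{r<\eps\}$, produce a separator $g$ of the disjoint sets $S\cap\{r\ge\eps'\}$ and $T\cap\{r\ge\eps'\}$ via Proposition~\ref{sep:disjoint}, and glue $f$ to $g$ using powers of $r$ together with \L{}ojasiewicz-type estimates (Theorem~\ref{more than loj} for the inner region, Proposition~\ref{f^N>g} for the unbounded outer region) --- matches the paper exactly. But the ansatz $p=\alpha(r)f+\beta(r)w^{2N}q$ is an unnecessary complication, and it is precisely this extra factor $w^{2N}$ (a positive polynomial for $Z:=\cl^Z(S\cap T)$) that manufactures the difficulty you then declare to be the ``technical heart'' of the proof and leave unresolved. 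The paper's ansatz is simply $p:=f+\left(\tfrac{2r}{\eps}\right)^N g$ with no such factor: the zero-set condition $\{p=0\}\cap(S\cup T)\subseteq Z$ is not enforced by making the correction vanish on $Z$, but falls out of the domination estimates --- on $\{r\le\eps/4\}$ the term $f$ strictly dominates, so $p=0$ forces $f=0$ and one invokes the separator property of $f$; on $\{r\ge\eps/4\}$ the term $(2r/\eps)^N g$ is nonzero (as $g$ separates two disjoint closed sets and $r>0$) and, for large $N$, strictly dominates $f$, so $p\ne 0$ there at all.

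Your ``main obstacle'' --- the sign of $f$ on $(S\cap Z)\setminus U$ --- is in fact vacuous. Since $r$ is a polynomial, $\{r=0\}$ is already Zariski closed, so $Z=\cl^Z(S\cap T)=\cl^Z\{r=0\}=\{r=0\}=S\cap T$, and by hypothesis $S\cap T\subseteq U$; hence $(S\cap Z)\setminus U=\emptyset$. There is no ``part of $Z$ protruding from $U$'', so neither the sign-changing $\alpha(r)$ nor the proposed descending induction on a lower-dimensional variety is needed. Once you drop $w^{2N}$ and observe that the correction term need not have the correct sign on all of $S$ and $T$ but only on the outer region $\{r\ge\eps/4\}$ (where it in fact has strict sign), the proof closes in a few lines as in the paper. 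As written, your proposal is not a complete proof: it correctly identifies the ingredients but stops at an obstacle that does not exist, and the stated candidate resolutions (sign-changing $\alpha$, induction on $Z$) would not obviously succeed and are in any case superfluous.
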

\begin{proof}
Since $r(x) \rightarrow \infty$ as $\|x\| \rightarrow \infty$, the set $\{r \le \eps\}$ is bounded for every $\eps \ge 0$. If $0 < \eps \le \frac{1}{2}$ is small enough, then $\{r \le \eps\} \subseteq U$. In view of Proposition~\ref{sep:disjoint}, there exists a polynomial $g$ that separates $S \cap \{r \ge \frac{\eps}{4}\}$ from $T \cap \{r \ge\frac{\eps}{4}\}$. We define $p:=f+ \left(\frac{2 r}{\eps}\right)^N g$ with $N \in \natur$ to be chosen below. In view of \L{o}jasiewicz's inequality we have $\frac{1}{2} |f| \ge \left|\frac{2 r}{\eps}\right|^N |g|$ on $( S \cup T) \cap \{r \le \frac{\eps}{4}\}$ for all sufficiently large $N$. On the other hand, by Proposition~\ref{f^N>g}, $\frac{1}{2} \left| \frac{2 r}{\eps}\right|^N |g| \ge |f|$ on $(S \cup T) \cap \{r \ge \eps\}$ for all sufficiently large $N$. Thus, $p$ separates $S$ from $T$ if $N$ is sufficiently large.
\end{proof}

\begin{proposition} \label{merge local sep}
 \thmtitle{Merging local separators} Let $S, T \subseteq \real^d$ be closed semialgebraic sets and let $x_1,\ldots,x_m \in \real^d$. Assume that there exists $\rho>0$ such that for every $i \in \{1,\ldots,m\}$ one has $\{x_i\} = S \cap T \cap B(x_i,\rho)$ and one can find a polynomial $p_i$ separating $S \cap B(x_i,\rho)$ from $T \cap B(x_i,\rho)$. Then there exist $\rho>0$ and $p \in \real[X]$ such that $p$ separates $S \cap U$ from $T \cap U$ for $U:= \bigcup_{i=1}^m B(x_i,\rho)$.
\end{proposition}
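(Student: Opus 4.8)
The plan is to shrink $\rho$ until the closed balls $B(x_1,\rho),\ldots,B(x_m,\rho)$ are pairwise disjoint (we may assume the $x_i$ pairwise distinct, deleting repetitions otherwise) and then to take the \emph{explicit} merged separator
\[
  p \;:=\; \sum_{i=1}^m p_i(X)\,\prod_{j\neq i}\|X-x_j\|^{2N}
\]
for $N\in\natur$ large enough. Shrinking $\rho$ is harmless: restricting $p_i$ to a smaller ball still separates $S\cap B(x_i,\rho)$ from $T\cap B(x_i,\rho)$, the equalities $S\cap T\cap B(x_i,\rho)=\{x_i\}$ (so $S\cap T\cap U=\{x_1,\ldots,x_m\}$) persist, and since $x_i\in S\cap T$ we automatically get $p_i(x_i)=0$.

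First I would record a {\L}ojasiewicz estimate at each $x_i$. The separator property together with $p_i(x_i)=0$ gives $\{p_i=0\}\cap(S\cup T)\cap B(x_i,\rho)\subseteq\cl^Z(S\cap T\cap B(x_i,\rho))=\{x_i\}=\{\|X-x_i\|^2=0\}$, so Theorem~\ref{more than loj} applied on the compact set $(S\cup T)\cap B(x_i,\rho)$ with $f=\|X-x_i\|^2$, $g=p_i$ and $h=1$ (where the multiplier may be taken constant, being the bounded case) yields $N_i\in\natur$ and $\alpha_i>0$ with $\|X-x_i\|^{2N_i}\le\alpha_i\,|p_i(X)|$ on $(S\cup T)\cap B(x_i,\rho)$.

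Then I would verify, ball by ball, that $p$ separates $S\cap U$ from $T\cap U$. Fix $x\in(S\cup T)\cap B(x_i,\rho)$. On this ball $\prod_{j\neq i}\|x-x_j\|^2$ is bounded below by a positive quantity which, as $\rho\to 0$, tends to the fixed positive number $A_i:=\prod_{j\neq i}\|x_i-x_j\|^2$; hence for $\rho$ small the $i$-th summand has the sign of $p_i(x)$ and absolute value at least $(A_i/2)^N|p_i(x)|$. Every other summand contains the factor $\|x-x_i\|^{2N}$, so the sum of them all is at most $C\,B^N\|x-x_i\|^{2N}$, with $B$ a fixed bound for the remaining distance factors and $C$ a bound for the $|p_l|$ on the ball; combining this with $|p_i(x)|\ge\|x-x_i\|^{2N_i}/\alpha_i$ and $\|x-x_i\|\le\rho$ reduces the required domination to an inequality of the form $\bigl(A_i/(2B\rho^2)\bigr)^N>\text{const}$. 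So, choosing $\rho$ so small that $\rho^2<A_i/(2B)$ for every $i$ and then $N$ large, the $i$-th summand strictly dominates the rest on $(S\cup T)\cap B(x_i,\rho)\setminus\{x_i\}$, while $p(x_i)=0$ (the $i$-th summand vanishes because $p_i(x_i)=0$, the others because each contains the factor $\|x_i-x_i\|^{2N}$). This yields $p\ge 0$ on $S\cap U$, $p\le 0$ on $T\cap U$ and $\{p=0\}\cap(S\cup T)\cap U=\{x_1,\ldots,x_m\}=\cl^Z(S\cap T\cap U)$, i.e.\ $p$ separates $S\cap U$ from $T\cap U$.

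The statement is fairly soft once Theorem~\ref{more than loj} is available; the only point requiring care is the last step, where near each $x_i$ both $p_i$ and the distance factor $\|X-x_i\|^{2N}$ degenerate and one must see that the cross terms are \emph{uniformly} dominated there. This is exactly what the {\L}ojasiewicz estimate, plus the freedom to shrink $\rho$ against the fixed positive constants $A_i$, delivers. (An inductive variant --- merging $m-1$ separators and then gluing in the last one across disjoint compacta by means of a polynomial furnished by Proposition~\ref{sep:disjoint} --- is also possible, but it leads to the same sort of estimates.)
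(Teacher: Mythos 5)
Your proof is correct and follows essentially the same strategy as the paper's: shrink $\rho$ so the balls become pairwise disjoint, write down an explicit merged polynomial from the local separators $p_i$ and the squared distances $r_j=\|X-x_j\|^2$, and invoke the H\"ormander--{\L}ojasiewicz inequality on each compact set $(S\cup T)\cap B(x_i,\rho)$ to make the ``native'' contribution dominate near $x_i$. The one genuine divergence is the shape of the formula. You take the additive blend $p=\sum_i p_i\prod_{j\ne i}r_j^N$, so near $x_i$ the $i$-th summand carries the sign of $p_i$ (weighted by a factor bounded below by $(A_i/2)^N$) while all other summands are $O(\|X-x_i\|^{2N})$, and {\L}ojasiewicz closes the gap; this forces you to fix $\rho$ small enough that $A_i/(2B\rho^2)>1$ before letting $N$ grow. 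The paper instead takes the multiplicative blend $p=\prod_i\bigl(p_i+(r_i/\delta^2)^N\bigr)$ with $\rho<\delta<\|x_i-x_j\|-\rho$: near $x_i$ the $i$-th factor acquires the sign of $p_i$ (the additive correction being dominated by $|p_i|$, again via {\L}ojasiewicz) while the remaining factors are simply positive, so the sign analysis is factor-by-factor and the domination is one-on-one rather than one-against-many. Both constructions rest on the same {\L}ojasiewicz estimate on $(S\cup T)\cap B(x_i,\rho)$, and your observation that $p$ vanishes on $(S\cup T)\cap U$ exactly at the $x_i$'s is precisely what the Zariski-closure clause of the separation definition requires.
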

\begin{proof}
	Let $r_i:=\|X-x_i\|^2$. Choose $\rho>0$ to be small enough and a $\delta$ such that $\rho < \delta < \|X_i-X_j\| - \rho$ for $1 \le i < j \le m$. Then, by {\L}ojasiewicz's inequality, for a sufficiently large $N \in \natur$ one has $\left(\frac{r_i}{\delta^2}\right)^N \le \frac{1}{2} |p_i|$ on $(S \cup T) \cap B(x_i,\rho)$ and $p_i + \left(\frac{r_i}{\delta^2}\right)^N > 0$ on $B(x_j,\rho)$ for $1 \le i < j \le m$. Thus, we may choose $$p:= \prod_{i=1}^m \left( p_i + \left(\frac{r_i}{\delta^2}\right)^N\right).$$
\end{proof}

The following proposition is a straightforward consequence of Propositions~\ref{merge:loc:glob} and \ref{merge local sep}.

\begin{proposition} \label{sep:finite:intersect} \thmtitle{Separation of sets with finite intersection} Let $S, T \subseteq \real^d$ be semialgebraic sets, $T$ closed, $S$ basic closed and bounded, such that $S \cap T$ is finite, say $S \cap T = \{x_1,\ldots, x_m\}.$ Then there exists a polynomial $p$ which separates $S$ from $T$ if and only if this holds locally, that means: There exists $\rho>0$ such that for $i \in \{1,\ldots,m\}$ there exists a polynomial $p_i$ which separates $B(x_i,\rho) \cap S$ from $B(x_i,\rho) \cap T.$ 
\end{proposition}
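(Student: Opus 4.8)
## Proof plan for Proposition~\ref{sep:finite:intersect}

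The statement is an "if and only if," but the forward direction is immediate: if a polynomial $p$ separates $S$ from $T$ globally, then restricting attention to any ball $B(x_i,\rho)$ shows the same $p$ separates $B(x_i,\rho) \cap S$ from $B(x_i,\rho)\cap T$, since all the defining conditions ($p \ge 0$ on $S$, $p \le 0$ on $T$, $\{p = 0\} \cap (S \cup T) \subseteq \cl^Z(S\cap T)$) are inherited by subsets. So the whole content is the converse, and the plan is to assemble it from the two preceding propositions.

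The plan is as follows. Assume we have $\rho > 0$ and local separators $p_i$ for $B(x_i,\rho) \cap S$ and $B(x_i,\rho)\cap T$, $i = 1,\ldots,m$. First, shrink $\rho$ if necessary so that the balls $B(x_i,\rho)$ are pairwise disjoint and each contains exactly one point of $S \cap T$, i.e. $\{x_i\} = S \cap T \cap B(x_i,\rho)$; this is possible because $S \cap T = \{x_1,\ldots,x_m\}$ is finite. Now apply Proposition~\ref{merge local sep}: it yields (after possibly shrinking $\rho$ once more) a single polynomial $f$ and an open semialgebraic set $U := \bigcup_{i=1}^m B(x_i,\rho)$ such that $f$ separates $S \cap U$ from $T \cap U$. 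This is exactly the "local separator on a neighborhood of $S\cap T$" hypothesis needed for Proposition~\ref{merge:loc:glob}.

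Second, I need to manufacture the polynomial $r$ required by Proposition~\ref{merge:loc:glob}: one needs $r \ge 0$ on $\real^d$, $S \cap T = \{r = 0\}$, and $r(X) \to \infty$ as $\|X\| \to \infty$. The natural choice is to take a positive polynomial for the finite set $S \cap T$, say $r_0(X) = \prod_{i=1}^m \|X - x_i\|^2$ (or the sum of squares version), which vanishes exactly on $S \cap T$ and is nonnegative; since it is a nonconstant polynomial that is a product of positive-definite-at-infinity factors, $r_0(X)\to\infty$ as $\|X\|\to\infty$. (If one is worried about the growth being fast enough or clean, one can instead take $r(X) = \|X\|^2\cdot\!\prod_i \|X-x_i\|^2 + (\text{something})$ or simply note $\prod_i\|X-x_i\|^2$ already tends to infinity.) Then $\{r = 0\} = \{x_1,\ldots,x_m\} = S \cap T$ as required. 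With $S$ bounded and closed, $T$ closed, $f$ separating $S \cap U$ from $T\cap U$ with $S\cap T\subseteq U$, all hypotheses of Proposition~\ref{merge:loc:glob} are met, and it produces a polynomial $p$ separating $S$ from $T$ globally. That completes the converse, hence the proposition.

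The only mild subtlety — and the place to be careful rather than a genuine obstacle — is bookkeeping about $\rho$: Proposition~\ref{merge local sep} may return a smaller radius than the one we started with, so the statement must be read (as it is phrased) as "there exists $\rho>0$" on the local side, and one should make sure the local separators $p_i$ still work on the smaller balls, which they trivially do since smaller balls are subsets. There is also the routine check that the various sets appearing ($S \cap \{r \ge \eps/4\}$, etc., inside the proof of Proposition~\ref{merge:loc:glob}) are semialgebraic and that $S$ being basic closed is what lets us invoke Proposition~\ref{sep:disjoint} at the relevant step — but all of that is already encapsulated in the two propositions we are quoting, so here it costs nothing. I expect the proof to be essentially a two-line application of Propositions~\ref{merge local sep} and \ref{merge:loc:glob}, once $r$ is written down.
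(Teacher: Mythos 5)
Your proof is correct and follows exactly the route the paper intends: the paper simply states that Proposition~\ref{sep:finite:intersect} is ``a straightforward consequence of Propositions~\ref{merge:loc:glob} and \ref{merge local sep}'' and gives no further detail, and you have filled in precisely those details, including the right choice of $r$. One tiny slip: the set $U$ you feed into Proposition~\ref{merge:loc:glob} must be open, while in the paper's notation $B(x_i,\rho)$ is the \emph{closed} ball; just take $U := \bigcup_i \calU(x_i,\rho)$ (the open balls), noting that a separator on the closed balls a fortiori separates on the open ones, and that $S\cap T\subseteq U$ still holds.
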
 

\begin{remark} 
	The following generalization of  Proposition~\ref{sep:finite:intersect} holds true. Let $S, T \subseteq \real^d$ be semialgebraic, $S$ compact, $T$ closed. Then there exists a polynomial $p$ which separates $S$ from $T$ if and only if for any finite set of points $X$ there exists $\rho>0$ and a polynomial $q$ such that  $q$ separates $S \cap U$ from $T \cap U$ for $U:=\left( \bigcup_{x \in X} B(x,\rho) \right)$. This follows from Proposition~6.10 (Chapter~VI)  in \cite{MR1393194}, whose proof, however, is not at all constructive.
\end{remark} 

\begin{proposition} \label{S:T1:T2} Let $S, T_1, T_2 \subseteq \real^d$ be semialgebraic sets, where $S$ is basic closed, $T_1$ is compact and $T_2$ is closed. Let $h$ be a non-negative polynomial with $\{h=0\} = \cl^Z(S \cap T_1)$ and let $f \in \real[X]$ be an arbitrary polynomial. Assume that 
\begin{enumerate}[a)]
	\item \label{08.07.28,11:48} $S \cap T_2 = \emptyset,$ 
	\item \label{08.07.28,11:49} $\intr (S \cap T_1)=\emptyset$,
	\item \label{08.07.28,11:50} $h>0$ on $T_2$ and, for $x \in T_2,$ $h(x) \rightarrow \infty$, as $\|x\| \rightarrow \infty.$
	\item \label{08.07.28,11:51} $\{f =0\} \cap T_1 \subseteq S \cap T_1.$ 
\end{enumerate} 
	Then there exists $p \in \real[x]$ such that 
\begin{enumerate}[i)]
	\item \label{08.07.28,11:52} $p \ge f$ on $S,$ 
	\item \label{08.07.28,11:53} $\sign p \le \sign f$ on $T_1,$ 
	\item \label{08.07.28,11:54} $p< 0$ on $T_2.$
\end{enumerate}
\end{proposition}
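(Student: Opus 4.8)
The plan is to build $p$ from $f$ by adding a correction term that is large and positive on $S$, vanishes to the right order on $S \cap T_1$, and drives $p$ negative on $T_2$. First I would apply Proposition~\ref{sep:disjoint} to the pair of disjoint compact/closed sets $S$ and $T_2$ (assumption \ref{08.07.28,11:48}) to get a polynomial $g$ separating $S$ from $T_2$; so $g \ge 0$ on $S$, $g \le 0$ on $T_2$, and $\{g=0\} \cap (S \cup T_2) \subseteq \cl^Z(S \cap T_2) = \emptyset$, i.e. $g>0$ on $S$ and $g<0$ on $T_2$. The idea is then to set
\[
 p := f + \lambda\, h^a\, g^b - \mu\, \bigl(1 + \|X\|^2\bigr)^M
\]
for suitable natural numbers $a,b,M$ and positive constants $\lambda,\mu$ — but the exact shape has to be tuned so that all three conclusions hold simultaneously, and a cleaner route may be to separate the "large positive on $S$" job from the "negative on $T_2$" job by using $g^b$ for the former and noting $g<0$ on $T_2$ handles the latter once the exponent $b$ is chosen appropriately (so I would likely take $b$ odd and large, and not need a separate subtracted term at all, absorbing everything into $\lambda h^a g^b$ — the subtracted $\|X\|^2$ term is only needed if $S$ is unbounded, but $S$ basic closed need not be bounded here, so I keep it as a fallback).

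The three conditions are then checked as follows. For \ref{08.07.28,11:53}: on $T_1$ the factor $h$ vanishes exactly on $\cl^Z(S \cap T_1) \supseteq S \cap T_1 \supseteq \{f=0\} \cap T_1$ by \ref{08.07.28,11:51}, so $h^a g^b$ vanishes wherever $f$ does on $T_1$; choosing the exponent $a$ large enough, Theorem~\ref{more than loj} (with $f$ the "vanishing controller", $g$ the target) on the closed semialgebraic set $T_1$ gives $|h^a g^b| \le |p_0 f|$ there for a positive polynomial $p_0$ — and since $T_1$ is compact $p_0$ is just a constant, so $|\lambda h^a g^b| \le \tfrac{1}{2}|f|$ on $T_1$ for the correction term scaled down; hence $\sign p = \sign f$ on $T_1$ (and where $f=0$ on $T_1$, $p$ also vanishes because of \ref{08.07.28,11:49} forcing the term to vanish there too — one has to use that $S\cap T_1$ has empty interior so its Zariski closure is a proper variety, which is where $h$ genuinely vanishes). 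For \ref{08.07.28,11:52}: on $S$ we have $h \ge 0$, $g>0$, so with $b$ even (or $b$ odd and $g>0$ on $S$ anyway) the correction term is $\ge 0$, but we must beat the possibly-negative $f$ and the subtracted $\|X\|^2$ term; since $g>0$ on $S$ and $g$ is a polynomial, and $f, (1+\|X\|^2)^M$ are polynomially bounded on $S$, Proposition~\ref{f^N>g} (applied with $f\rightsquigarrow g$ on the set $S \cap \{g \ge \text{const}\}$, which is where $g \to \infty$ need not hold — here is a subtlety) lets us dominate; more carefully, on the compact piece of $S$ this is trivial by choosing $\lambda$ large, and on the unbounded part we invoke Proposition~\ref{f^N>g} after checking $g$ does not tend to $0$ at infinity along $S$, which follows from $\{g=0\}\cap S=\emptyset$ plus a H\"ormander--\L ojasiewicz lower bound $g \ge c(1+\|X\|^2)^{-L}$ on $S$ — then raising the exponent $b$ suffices. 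For \ref{08.07.28,11:54}: on $T_2$ we have $g<0$, and with $b$ odd, $h>0$ (by \ref{08.07.28,11:50}) and $h \to \infty$ at infinity on $T_2$, the term $\lambda h^a g^b$ is negative and dominates $f$ there — again splitting into a compact part (choose $\lambda$ large) and an unbounded part (use $h \to \infty$ on $T_2$ and Proposition~\ref{f^N>g} to swamp $f$), so $p<0$ on $T_2$.

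The main obstacle I expect is the bookkeeping of parities and exponents so that $h^a g^b$ is simultaneously $\ge 0$ on $S$, exactly-vanishing-of-the-right-order on $S\cap T_1$ to be swamped by $|f|$, and strictly negative (hence needs $g^b<0$, i.e. $b$ odd) on $T_2$ — the sign demands on $S$ and on $T_2$ pull in opposite directions and are reconciled only because $g$ itself already has the correct sign on each (positive on $S$, negative on $T_2$), so the single factor $g^b$ with $b$ odd does both jobs; meanwhile $h^a$ must be large at infinity on $T_2$ (guaranteed by \ref{08.07.28,11:50}) yet not so wild on $T_1$ that it cannot be dominated by $|f|$ — this is exactly what Theorem~\ref{more than loj} delivers once the hypothesis $\{f=0\}\cap T_1 \subseteq S\cap T_1 \subseteq \{h=0\}\cap T_1$ is in place. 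Handling the unbounded behaviour of $S$ and $T_2$ cleanly via Propositions~\ref{f^N>g} and the H\"ormander--\L ojasiewicz form of Theorem~\ref{more than loj} (to get polynomial lower bounds on $g$ over $S$) is the second delicate point; everything on compact pieces is routine and absorbed into the constants $\lambda, \mu$.
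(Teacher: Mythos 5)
Your overall shape — take $p = f + (\text{correction})$ with the correction built from a separator and powers of $h$ — is the right one, and your use of the paper's Theorem~\ref{more than loj} on the compact set $T_1$ to get $|h^N| \le C|f|$ is correct. But there is a real gap in the bookkeeping you flag yourself, and it does not close. You need $|\lambda h^a g^b| \le \tfrac12|f|$ on $T_1$ (forcing $\lambda$ small, with a ceiling depending on $\sup_{T_1}|g|$ and $\sup_{T_1}h$) and, simultaneously, $\lambda h^a|g|^b > f$ on $T_2\cap\{f>0\}$ (forcing $\lambda$ large, with a floor depending on $\inf_{T_2}|g|$ and $\inf_{T_2}h$). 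These two requirements involve the \emph{same} constant $\lambda$; writing ``choose $\lambda$ large'' for the compact part of $T_2$ contradicts the choice you already made for $T_1$. Raising $a$ does not clearly help, since $h$ need not be larger on $T_2$ than on $T_1$ — in particular on $T_0 := T_1\cap\{h\ge\delta\}$, where $h$ is \emph{not} small, your separator $g$ has uncontrolled sign, so the correction there can be positive and large, and its magnitude must then be beaten by $|f|$ where $f<0$. Your proposed fallback term $-\mu(1+\|X\|^2)^M$ is also unusable: on $S\cap T_1$ one has $h=0$, so the $h^a g^b$ part vanishes and the subtraction would make $p<f$ there, killing conclusion~i).

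The paper resolves exactly this tension with two ideas you are missing. First, it does not separate $S$ from $T_2$ alone, but from $T_0\cup T_2$, where $T_0=T_1\cap\{h\ge\delta\}$ for a suitably small $\delta$; since $S\cap T_1\subseteq\{h=0\}$, we have $S\cap T_0=\emptyset$, so Proposition~\ref{sep:disjoint} applies. The resulting $q$ is then \emph{negative} on the whole region of $T_1$ where $h$ is not small, so there the correction is automatically of the ``helpful'' sign for $\sign p\le\sign f$, and no magnitude control is needed. Second, instead of a single constant $\lambda$, the paper uses the threshold polynomial $\mu=\mu_{\delta,\rho}$ from Lemma~\ref{lambda:mu:lem}\ref{mu:part}), setting $p := f + q\,h^l\,[\mu\circ(h-\delta)]^{l+m}$. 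The factor $[\mu\circ(h-\delta)]^{l+m}$ is uniformly tiny (bounded by $(1/2)^{l+m}$) where $h<\delta$ — which, together with $h^l$ and {\L}ojasiewicz on $T_1$, handles $T_1\setminus T_0$ — while it is uniformly large (at least $2^{l+m}$, and tending to $\infty$ with $h$) where $h\ge 2\delta$, which covers $T_2$ since $h\ge\alpha>2\delta$ there. Thus a single pair of exponents $l,m$, taken large, shrinks the correction on $T_1\setminus T_0$ and amplifies it on $T_2$ at the same time, which is what a single constant $\lambda$ cannot do. If you adopt both of these devices, your plan goes through.
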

\begin{proof}
	By assumption \ref{08.07.28,11:50}) there exists $\alpha>0$ such that $h\ge \alpha$ on $T_2.$ Choose $\delta$ with $0 < 2 \delta < \alpha$ and $\delta < \rho.$ We introduce the polynomial $\mu=\mu_{\delta,\rho}$ as in Lemma~\ref{lambda:mu:lem}\ref{mu:part}).  Let $T_0 := T_1 \cap \{h \ge \delta \}.$ Then $S \cap T_0 = \emptyset.$ Taking into account assumption \ref{08.07.28,11:48}) and applying Proposition~\ref{sep:disjoint}, we can find a polynomial $q \in \real[X]$ with $q>0$ on  $S$ and $q<0$ on $T_0 \cup T_2.$ Now, set $p:= f + q \cdot h^l \cdot [\mu \circ (h-\delta)]^{l+m}$, where $l, m \in \natur$ are sufficiently large.
\end{proof} 

\section{Small polynomial description of special semialgebraic sets} \label{sect:simple:semialg}

If $a_1,\ldots,a_k \in \real$, let $\sigma_i(a_1,\ldots,a_k)$ denote the $i$th \term{elementary symmetric} function of $a_1,\ldots,a_k$, that is, 
$$\sigma_i(a_1,\ldots,a_k) := \sum_{1 \le j_1 < \ldots < j_i \le k} a_{j_1} \cdots a_{j_i}.$$ 
The basic observation is the following. 

\begin{lemma} \label{el:sym:lemma} 
	Let $\rho > 0$ and $s, k \in \natur$ with $s \le k.$ Then there exists $\eps>0$ such that for all $a_1,\ldots,a_k \in \real$ with $|a_1| \le \eps, \ldots, |a_s| \le \eps$ and $a_{s+1} \ge \rho, \ldots, a_k \ge \rho$ one has $a_1,\ldots,a_s \ge 0$ if and only if $$\sigma_{k-s+1}(a_1,\ldots,a_k) \ge 0, \ldots, \sigma_{k}(a_1,\ldots,a_k) \ge 0.$$ 
\end{lemma}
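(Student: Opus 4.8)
The plan is to reduce the statement to a continuity/compactness argument after isolating the role of the "large" variables $a_{s+1},\ldots,a_k$. First I would observe that the forward implication is trivial: if $a_1,\ldots,a_s\ge 0$ then all of $a_1,\ldots,a_k$ are non-negative (since $a_{s+1},\ldots,a_k\ge\rho>0$), and every elementary symmetric function of non-negative reals is non-negative, so in particular $\sigma_{k-s+1},\ldots,\sigma_k\ge 0$ with no constraint on $\eps$ needed. The content is the converse, and here the key algebraic identity is that $\sigma_j(a_1,\ldots,a_k)$, viewed as a polynomial in $a_1,\ldots,a_s$ with the remaining $a_{s+1},\ldots,a_k$ as parameters, can be expanded via
\[
\sigma_j(a_1,\ldots,a_k)=\sum_{i=0}^{j}\sigma_i(a_1,\ldots,a_s)\,\sigma_{j-i}(a_{s+1},\ldots,a_k).
\]
For $j=k-s+1,\ldots,k$ the dominant term as $a_1,\ldots,a_s\to 0$ is the one with the largest possible $i$, namely $i=j-(k-s)$, i.e.\ $\sigma_{j-(k-s)}(a_1,\ldots,a_s)\cdot\sigma_{k-s}(a_{s+1},\ldots,a_k)$; all other terms carry a factor $\sigma_{i}(a_1,\ldots,a_s)$ with $i<j-(k-s)$ multiplied by bounded quantities, hence are $o$ of the dominant term uniformly once $|a_1|,\ldots,|a_s|$ are small relative to $\rho$.

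From this expansion the converse follows by a descending induction on the index. Suppose $\sigma_{k-s+1},\ldots,\sigma_k\ge 0$. Looking at $\sigma_k=\sigma_s(a_1,\ldots,a_s)\cdot\sigma_{k-s}(a_{s+1},\ldots,a_k)+(\text{lower order in }a_1,\ldots,a_s)$ and using $\sigma_{k-s}(a_{s+1},\ldots,a_k)\ge\rho^{k-s}>0$, one deduces that $\sigma_s(a_1,\ldots,a_s)$ is at least slightly negative is impossible, so effectively $\sigma_s(a_1,\ldots,a_s)\ge -C\eps$ for a constant $C$ depending only on $\rho,s,k$; iterating through $\sigma_{k-1},\sigma_{k-2},\ldots,\sigma_{k-s+1}$ gives, for each $t=1,\ldots,s$, an inequality of the form $\sigma_t(a_1,\ldots,a_s)\ge -C\eps$ (after absorbing the already-controlled $\sigma_{t+1},\ldots,\sigma_s$ into the error term). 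Thus all $s$ elementary symmetric functions of $a_1,\ldots,a_s$ are $\ge -C\eps$. Now I invoke the standard fact (a compactness statement, or Descartes/Newton inequalities) that for real numbers $a_1,\ldots,a_s$ lying in $[-\eps,\eps]$, having $\sigma_1,\ldots,\sigma_s$ all $\ge -C\eps$ forces $a_1,\ldots,a_s\ge -\eta(\eps)$ where $\eta(\eps)\to 0$ as $\eps\to 0$; more cleanly, if some $a_i<0$ then the polynomial $\prod(t-a_j)=t^s-\sigma_1 t^{s-1}+\cdots$ has a negative root, and one can quantify how negative in terms of the $\sigma$'s. Choosing $\eps$ small enough that $\eta(\eps)$ is smaller than the gap forced by the sign pattern yields $a_1,\ldots,a_s\ge 0$. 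Since the whole argument is uniform in the $a_i$'s (the constants depend only on $\rho,s,k$), a single $\eps$ works.

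The main obstacle I anticipate is making the final "small $\sigma$'s force small roots" step clean and uniform: one needs a genuinely quantitative version of the statement that the coefficients of a monic real-rooted polynomial with roots in a bounded interval control the roots continuously near the configuration where all roots are $\ge 0$. The cleanest route is probably to argue by contradiction with compactness: normalize and suppose there were sequences $a_1^{(n)},\ldots,a_k^{(n)}$ with $\eps_n\to 0$ violating the equivalence; pass to a convergent subsequence, so $a_1,\ldots,a_s\to 0$ and $a_{s+1},\ldots,a_k\to$ limits $\ge\rho$; then the hypotheses $\sigma_{k-s+1},\ldots,\sigma_k\ge 0$ plus the expansion above, divided by the appropriate power controlling the rate, force a contradiction with the assumption that $\min_i a_i^{(n)}<0$ persistently. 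Either the direct quantitative estimate or this compactness packaging will close the argument; I would present whichever the authors find shorter, but I expect the compactness version is the most economical and is exactly the kind of non-effective reduction the introduction warns about.
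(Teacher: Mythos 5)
Your route diverges from the paper's. The paper does not try to isolate $\sigma_t(a_1,\ldots,a_s)$. Instead it looks at the coefficients of $\prod_{j=1}^k(t+a_j)=t^k+\sum_{i=1}^k\sigma_i(a_1,\ldots,a_k)t^{k-i}$: for $i\le k-s$ the coefficient $\sigma_i(a_1,\ldots,a_k)$ contains the strictly positive summand $\sigma_i(a_{s+1},\ldots,a_k)\ge\binom{k-s}{i}\rho^i$, and the remaining summands, each carrying at least one factor among $a_1,\ldots,a_s$, are dominated by it once $\eps/\rho$ is small (one checks $\sigma_{i-m}(a_{s+1},\ldots,a_k)\le\binom{i}{m}\rho^{-m}\sigma_i(a_{s+1},\ldots,a_k)$ to handle the lack of an upper bound on the large variables). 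Thus $\sigma_1,\ldots,\sigma_{k-s}>0$, and together with $\sigma_{k-s+1},\ldots,\sigma_k\ge 0$ all coefficients are non-negative, so the monic polynomial has no positive root, i.e.\ every $-a_j\le 0$, i.e.\ every $a_j\ge 0$. That single global observation replaces the final step you were struggling with.

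Your proposal has a genuine gap at exactly that final step. Leaving aside the index mix-up (the dominant term in the convolution $\sum_i\sigma_i(a_1,\ldots,a_s)\sigma_{j-i}(a_{s+1},\ldots,a_k)$ is the one with the \emph{smallest} admissible $i$, namely $i=j-(k-s)$; the other admissible terms have $i>j-(k-s)$, not $i<j-(k-s)$), the descending induction only yields the approximate bounds $\sigma_t(a_1,\ldots,a_s)\ge -\eta(\eps)$ for $t<s$ (and $\sigma_s\ge 0$). Approximate non-negativity of $\sigma_1,\ldots,\sigma_{s-1}$ does not force $a_1,\ldots,a_s\ge 0$: for instance $a_1,a_2\in(-\eta,0)$ already satisfies $\sigma_2>0$ and $\sigma_1\ge -2\eta$. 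Your phrase "$\eta(\eps)$ smaller than the gap forced by the sign pattern" has no content, because no such gap exists --- the offending negative $a_i$'s may be arbitrarily close to $0$. The compactness fallback does not close this either: in any limit with $a_1,\ldots,a_s\to 0$ every $\sigma_j(a_1,\ldots,a_k)$ with $j>k-s$ tends to $0$, which is consistent with the non-strict hypotheses, so no contradiction appears unless you carry out exactly the quantitative rate analysis you were trying to avoid; moreover, since $a_{s+1},\ldots,a_k$ have no upper bound, "pass to a convergent subsequence" requires a normalization you have not supplied. The missing idea is to avoid splitting off $a_1,\ldots,a_s$ altogether and instead use the full $\sigma_i(a_1,\ldots,a_k)$'s as coefficients of a polynomial and the no-positive-root criterion.
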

\begin{proof} 
	The necessity is trivial. Let us prove the sufficiency. Consider the polynomial 
	$$
		\prod_{i=1}^k (t+a_i) =\sum_{i=1}^k \sigma_i(a_1,\ldots,a_k) t^{k-i} + t^k
	$$
	For $i \le k-s$ there is a summand in the definition of $\sigma_i(a_1,\ldots,a_k)$ without factors belonging to $\{a_1,\ldots,a_s\}$. The summands containing a factor of $\{a_1,\ldots,a_s\}$ tend to zero for $\eps \rightarrow 0.$ Hence for all sufficiently small $\eps>0$ depending on $k, s, \rho$ and for $i \le k-s$ one has $\sigma_i(a_1,\ldots,a_k) >0.$ One has $\sigma_i(a_1,\ldots,a_k) \ge 0$ for $i > k-s,$ by the assumption. Consequently, the polynomial $\prod_{i=1}^k (t+a_i)$ has no positive roots, which implies that $a_1 \ge 0,\ldots,a_k \ge 0.$ 
\end{proof} 

\begin{proof}[Proof of Theorem~\ref{av:thm}] 
	\ref{av:thm:a}) Let $p_i := \sigma_{k-s+i}(q_1,\ldots,q_k)$ for $i \in \{1,\ldots,s\}.$ Then $S \subseteq \{p_1 \ge 0,\ldots,p_s \ge 0\}.$ Moreover, by Lemma~\ref{el:sym:lemma}, for each $x \in \bd S$ there is a neighborhood $B(x,\eps)$ such that $S \cap B(x,\eps) = \{p_1 \ge 0,\ldots,p_s \ge 0\} \cap B(x,\eps).$ That means 
	\begin{equation} \label{S:cup:T:repr}
		\{p_1 \ge 0,\ldots,p_s \ge 0\} = S \cup T,
	\end{equation} 
	 where $T \subseteq \real^d$ is semialgebraic, closed and 
	\begin{equation} \label{T:S:disj}
		T \cap S = \emptyset.
	\end{equation}
	 Now, according to Proposition~\ref{sep:disjoint}, we can choose $p_{s+1} \in \real[X]$ which separates $S$ from $T.$ 

	\ref{av:thm:b}) We take $p_1,\ldots,p_{s}$ as before. Let $\{x_1,\ldots,x_m\}$ be the set of all points in $S$ where exactly $s$ polynomials $q_i$ vanish. We see that $R := ( \{p_1 \ge 0,\ldots, p_{s-1} \ge 0\} \setminus S) \cup \{x_1,\ldots,x_m\}$ is closed. Clearly, $R \cap S = \{x_1,\ldots,x_m\}.$ Moreover, for each $x_i$ there is a ball $B(x,\eps)$ such that $p_s$ separates $B(x_i,\eps) \cap S$ and $B(x_i,\eps) \cap R$. Now, according to Proposition~\ref{sep:finite:intersect}, we can modify $p_s$ to a polynomial separates $S$ from $R.$ 
\end{proof} 

\begin{remark}
Analogous semi-effective results can also be obtained for basic open sets by using similar methods.
\end{remark}

\section{Minimal description of polyhedra} \label{sect:polytopes}

A subset of $\real^d$ is said to be a \term{polyhedron} if it is the intersection of finitely many closed halfspaces. Bounded polyhedra are called \term{polytopes}. For background information on polytopes and polyhedra we refer to \cite{Ziegler-book-1995}. By $\calF(P)$ we denote the set of all faces of $P.$ By $\calF_k(P)$ we denote the set of all $k$-dimensional subfaces of  $P.$ If the choice of $P$ is clear we merely write $\calF_k$ and $\calF.$ Polytopes of dimension $d$ are called \term{$d$-polytopes}.  Faces of dimension $0$ and $\dim P-1$ are called \term{vertices} and \term{facets}, respectively. We introduce the \term{$k$-skeleton} (also called the set of all \term{$k$-extremal points}) of $P$ by $$\ext_k P := \bigcup_{F \in \calF_k(P)} F.$$ We also write $\ext P:=\ext_0 P$. 

\begin{notationsremarks} \label{rem:polytopes} Let $S \subseteq \real^d$ be a \emph{polyhedron}.
\begin{enumerate}[a)]
	\item For a $k$-face $F$ of $S$ we fix a degree-one polynomial $l_F$ such that $F = \{l_F = 0\} \cap P$ and $l_F \ge 0$ on $S.$ 
	Then 
	$$D_k(S):= \bigcap_{F \in \calF_k} \{l_F \ge 0\}$$ 
	is called a \emph{$k$-support} of $S.$ The $k$-support depends on the choice of $l_F$'s. We have $D_{d-1}(S) = S.$ 
	\item \label{08.07.29,15:59} For $l < k$ every $l$-face of $S$ is also an $l$-face of $D_k(S)$. 

	\item If $S$ is compact, $D_k(S)$ is not compact in general, unless $l_F$'s are chosen in a suitable way.

	\item For a vertex $x \in S$ we set 
	\begin{align*}
		S_x := \bigcap_{\overtwocond{F \in \calF_{d-1}(S)}{x \in F}} \{l_F \ge 0\} & & \mbox{and} & & \mymark{S}_x := \bigcap _{\overtwocond{F \in \calF_{d-1}(S)}{x \not\in F}} \{l_F > 0\}. 
	\end{align*}
	Notice that $P_x$ is closed and $\mymark{P}_x$ is open. 
	\item \label{chain:of:k:supports} Now let $S$ be a polytope. Then there exists a sequence of $k$-supports $D_{-1}(S),\ldots, D_{d-1}(S)$ with 
	$$
		S = D_{d-1}(S) \subseteq \cdots \subseteq D_0(S) \subseteq D_{-1}(S) = \real^d.
	$$
	and $D_k(S) \setminus \ext_{k-1} S \subseteq \intr (D_{k-1})$ for $k=0,\ldots,d-1$. 
	\item For $k=0,\ldots,d-1$ one has 
	\begin{enumerate}[i)]
		\item \label{08.07.29,16:25} $D_k(S) = \bigcap_{x \in \ext S} D_k(S)_x.$ 
		\item \label{08.07.29,16:26} There is a compact semialgebraic set $R_k$ such that $$S \subseteq \intr(R_k) \subseteq R_k \subseteq \bigcup_{x \in \ext S} \mymark{D_k(S)}_x.$$
	\end{enumerate}
	Here i) holds, since each facet of $D_k(S)$ contains a vertex of $S,$ and ii) follows from the inclusion 
	\begin{equation} \label{08.09.17,17:25}
		S \subseteq \bigcup_{x \in \ext S} \mymark{D_k(S)}_x.
	\end{equation} 
	Let us show \eqref{08.09.17,17:25}. Take an arbitrary $y \in S.$ Then there exists a unique face $G$ with $y \in \relint G.$ Let $x$ be any vertex of $G$ and let $F$ be a facet with $x \not\in F.$ Then $y \not\in F,$ since otherwise we would have $\relint G \cap F \ne \emptyset,$ which implies $G \subseteq F$ and by this $x \in F,$ a contradiction. Hence $l_F(y)>0.$  This yields \eqref{08.09.17,17:25}. 
\end{enumerate} 
\end{notationsremarks}

\begin{proposition} \label{main proposition}
 For $k=0,\ldots,d-1$ there is a polynomial $p_k$ such that 
\begin{enumerate}[a)]
 \item $p_k \ge 0$ on $S$
\item \label{main prop cond 2} $p_k \le 0$ on $D_{k-1}(S) \setminus \intr (D_k(S))$
\item \label{main prop cond 3} $\{p_k = 0\} \cap (D_{k-1}(S) \setminus \intr(D_k(S))) \subseteq S$
\end{enumerate}
\end{proposition}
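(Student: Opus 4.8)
The plan is to construct $p_k$ as a separator in the sense of the Definition preceding Lemma~\ref{lambda:mu:lem}, applied to the pair of sets $S$ and $T := D_{k-1}(S) \setminus \intr(D_k(S))$. The three conditions a), b), c) say exactly that $p_k$ separates $S$ from $T$: indeed, $p_k \ge 0$ on $S$, $p_k \le 0$ on $T$, and $\{p_k = 0\} \cap T \subseteq S$ (note $S \cap T \subseteq \cl^Z(S \cap T)$ automatically, and the condition $\{p_k=0\}\cap S \subseteq \cl^Z(S\cap T)$ needs $S \cap T$ to be Zariski dense in the relevant boundary piece of $S$, which holds because $S \cap T$ contains $\ext_{k-1}S$ together with the part of $\bd D_k(S)$ lying in $S$). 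The whole point is that $S$ and $T$ are \emph{not} disjoint — they meet along the boundary facets of $D_k(S)$ inside $S$ — so Proposition~\ref{sep:disjoint} does not apply directly, and one needs the localize-then-globalize machinery.

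First I would work locally at each vertex $x \in \ext S$. Using the decompositions from \ref{rem:polytopes}\ref{08.07.29,16:25}), namely $D_k(S) = \bigcap_{x} D_k(S)_x$, and the fact that near $x$ the set $D_k(S)$ is described by the finitely many linear forms $l_F$ with $x \in F$, the local picture of $S$ versus $T$ at $x$ is a \emph{conical} one: it is governed by signs of linear forms. In such a local polyhedral-cone situation a separating polynomial can be built by hand — this is where one uses the elementary-symmetric-function trick (Lemma~\ref{el:sym:lemma}) or, more simply, the polynomials $\kappa_{\delta,\rho,m}$ and $\mu_{\delta,\rho}$ from Lemma~\ref{lambda:mu:lem} composed with the $l_F$'s — to produce a polynomial $p_k^{(x)}$ that separates $S \cap B(x,\rho)$ from $T \cap B(x,\rho)$, with the zero set controlled so that it lands in $S$ (using that every facet of $D_k(S)$ passes through some vertex of $S$, so the "spurious" components of $\{l_F = 0\}$ that are not in $S$ are pushed away from $x$). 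One also needs the corresponding statement at points of $\bd D_k(S) \cap S$ that are not vertices, but there the local structure is even simpler (a single active constraint, or a lower-dimensional cone), so an analogous or easier construction applies; in fact one can cover a compact neighborhood of $S$ by the open sets $\mymark{D_k(S)}_x$ as in \eqref{08.09.17,17:25}, and on each such piece only the constraints through $x$ are active.

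Next I would patch these local separators into a separator valid on an open neighborhood $U$ of $S \cap T$ using Proposition~\ref{merge local sep} (merging the finitely many vertex-localized separators over $U = \bigcup_x B(x,\rho)$), and then globalize with Proposition~\ref{merge:loc:glob}: one needs a non-negative polynomial $r$ with $S \cap T = \{r = 0\}$ and $r \to \infty$ at infinity, which can be taken of the form $r = r_0 + (1+\|X\|^2)^M$ where $r_0$ is a positive polynomial for the algebraic set $\cl^Z(S \cap T)$ — here one must check $S \cap T$ is closed (it is, being the difference of a closed set and an open set intersected appropriately) and bounded or else handle the unbounded directions of $D_{k-1}(S)$ separately. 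Feeding the local separator $f = p_k^{\mathrm{loc}}$, the bounded set $S$, the closed set $T = D_{k-1}(S)\setminus\intr(D_k(S))$, and this $r$ into Proposition~\ref{merge:loc:glob} yields the desired global $p = p_k$.

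\textbf{Main obstacle.} The delicate point is the zero-set condition c): the naive separating polynomials built from the $l_F$'s vanish on all of $\{l_F = 0\}$, not just on the portion inside $S$, so one must carefully arrange that outside a neighborhood of the vertices the separator is \emph{strictly} negative on $T$ (this is exactly what the extra factor $q$, strictly negative away from $S$, accomplishes in the proof of Proposition~\ref{S:T1:T2}, and what the $\mu$-factors do in Proposition~\ref{merge local sep}). Equivalently, I expect the real work to be verifying that the local separators at the vertices can be chosen so that, after merging, $\{p_k = 0\} \cap T$ does not pick up stray pieces of hyperplanes $\{l_F = 0\}$ that stick out of $S$; handling the possible unboundedness of $D_{k-1}(S)$ (so that $D_{k-1}(S) \setminus \intr D_k(S)$ is unbounded) while keeping $S$ compact is the other technical nuisance, resolved by the growth hypothesis on $r$ in Proposition~\ref{merge:loc:glob} together with Proposition~\ref{f^N>g}.
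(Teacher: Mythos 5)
Your proposal correctly identifies the overall shape of the problem — a separation problem between $S$ and $T = D_{k-1}(S)\setminus\intr(D_k(S))$ with non-trivial intersection along $\ext_k S$, to be solved by building local separators and then globalizing — and it gets the $k=0$ case essentially right (reduce to Proposition~\ref{sep:finite:intersect} at the vertices). However, for general $k$ there are two genuine gaps.

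First, the construction of the local separator at a vertex $x$ is not actually carried out. You write that ``in such a local polyhedral-cone situation a separating polynomial can be built by hand'' via Lemma~\ref{el:sym:lemma} or the $\kappa,\mu$ polynomials of Lemma~\ref{lambda:mu:lem}. This works only when the cone at $x$ is simplicial (a simple vertex), so that the number of active linear forms is bounded; for a non-simple vertex of a general polytope, neither the symmetric-function trick nor a direct $\kappa$-composition yields the required separator with controlled zero set. The paper's proof resolves exactly this by \emph{induction on the dimension $d$}: it takes the vertex figure $S^i = \{l_{x_i}=\eps\}\cap S$, which is a $(d-1)$-polytope, applies the inductive hypothesis to get a polynomial $p_{i,k-1}$ for $S^i$ and its supports, and then homogenizes at $x_i$ to produce $q_{i,k}$. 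This inductive step is the missing key idea in your plan, and without it the local construction is only asserted, not proved.

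Second, the patching step is done with the wrong tool. You invoke Proposition~\ref{merge local sep}, but that proposition explicitly requires $\{x_i\} = S\cap T\cap B(x_i,\rho)$, i.e.\ that $S\cap T$ meets each ball in a single point. For $1\le k\le d-2$ the set $S\cap T = \ext_k S$ is $k$-dimensional, so this hypothesis fails and the proposition does not apply. The paper instead forms the combination $r_k=\sum_i q_{i,k}\prod_j(\kappa\circ l_{G_j})^n$ and verifies the required sign and zero-set conditions directly on the compact set $Q_{k-1}=R_k\cap(D_{k-1}(S)\setminus\intr D_k(S))$ by a partition argument (Steps 5--7), then globalizes with Proposition~\ref{S:T1:T2} (not with Proposition~\ref{merge:loc:glob}, whose hypothesis $S\cap T=\{r=0\}$ and the associated {\L}ojasiewicz estimate is tuned for the finite-intersection case). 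Your diagnosis of the main obstacle — controlling the zero set so it does not pick up stray hyperplane pieces outside $S$ — is correct, but the mechanism you propose to overcome it does not match the geometry of the problem for $k\ge 1$.
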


In \ref{main prop cond 2}) and \ref{main prop cond 3}) we could also replace $D_{k-1}(S) \setminus \intr (D_k(S))$ by $D_{k-1}(S) \setminus D_k(S)$, but for the proof by induction, which we give below, it is better to have it this way. 

\begin{proof}[Proof of Proposition~\ref{main proposition}] The proof is by induction on $d$. For starting the induction argument we consider the cases $k=0$ and $k=d-1$ separately, which is done in Steps 1 and 2 of the proof. In the remaining steps we apply the inductive assumption to all vertex figures of $S$. Appropriately combining the polynomials associated to the vertex figures we generate polynomials associated to $S$. In Step~3 for each vertex of $S$ we construct  a polynomial satisfying a)-c) in a small neighborhood of that vertex. We combine these polynomials in Step 6, thus getting a polynomial $r_k$ and show in Step 7 that $r_k$ fulfills the conditions a),b),c) locally, that means in a set $Q_{k-1}$ which we get by restriction to $R_k$. This is the main step. It uses decompositions of $Q_{k-1}$ which are explained in Step 4 and 5. Finally, in Step 8 we globalize $r_k$ in order to get $p_k$.

\emph{Step~1: $k=0$.} We need to show the existence of $p_0$ such that $p_0 \ge 0$ on $S$, $p_0 \le 0$ on $\real^d \setminus \intr D_0(S)$ and $\{p_0=0\} \cap (\real^d \setminus \intr D_0(S)) \subseteq S$. We use Proposition~\ref{sep:finite:intersect}, where $T:= \real^d \setminus \intr(D_0(S))$. Then $S \cap T = \ext S $ is a finite set, and a local separation of $S$ from $T$ around each $x \in S \cap T$ can be achieved. 

\emph{Step~2: $k=d-1$.} Let $\mathcal{F}_{d-1}(S)=\{F_1,\ldots,F_r\}$. We define $p_{d-1} := \prod_{i=1}^r l_{F_i}$. Clearly, $p_{d-1} \ge 0$ on $S$. Now let 
$$S_i:= \{l_{F_i} \le 0\} \cap \bigcap_{\overtwocond{j=1,\ldots,r}{j \ne i}} \{l_{F_j} \ge 0\}.$$
Then by construction we have $D_{d-2}(S) \subseteq \bigcup_{i=1}^r S_i$, $p_{d-1} < 0$ on $(\bigcup_{i=1}^r S_i) \setminus S$ and $p_{d-1}=0$ on $\bd S$. This proves b) and c).

The case $d=1$ is trivial. Cases 1 and 2 yield the assertion for $d=2$. Now assume that $d \ge 3$.

\emph{Step~3: Construction of a local solution $q_{i,k}$.} Let $\ext S = \{x_0,\ldots,x_m\}$. For small $\eps>0$ the hyperplane $\{l_{x_i} = \eps\}$ intersects $S$, say $S^i := \{l_{x_i}= \eps\} \cap S$, where we choose $\eps$ such that $l_{x_i}(x_j) > \eps$ for $j \ne i$. Also, we set $D_{i,k-1} = \{l_{x_i} = \eps\} \cap D_k(S)$. So, we can use the inductive assumption to the $(d-1)$-dimensional polytope $S^i$ (in the $(d-1)$-dimensional affine space $\aff S^i$). For all $\eps>0$ as above $S^i$ remains the same up to a homothety. Let $1 \le k \le d-2$ be given. We want to construct a suitable polynomial $p_k$. For $i=1,\ldots,m$ let $p_{i,k-1}$ be a polynomial as in the assertion with respect to the polytope $S^i$ and the sets $D_{i,k-1}$ and $D_{i,k-2}$. Now let $q_{i,k}$ be the homogenization of $p_{i,k}$ with respect to the center $x_i$. Around $x_i$ the polynomial $q_{i,k}$ satisfies the properties a)-c). So, in order to generate $p_k$ we should combine the polynomials $q_{i,k}$ in a suitable way.

\emph{Step~4: Notations and Remarks.} For $k=0,\ldots,d-1$ let 
\begin{equation} \label{compactified playground}
	Q_{k-1} := R_k \cap (D_{k-1}(S) \setminus \intr (D_k(S)))
\end{equation}
Then $Q_{k-1}$ is compact. Let $G_1,\ldots,G_s$ be the facets of $D_k(S)$. (Here we keep $k$ fixed, so we omit the index $k$ at the $G_i$'s.) For $\delta>0$ and $x_i \in S_0 = \{x_1,\ldots,x_m\}$ let 
\begin{equation*}
 \mymark{D_k(S)}_{x_i,\delta} := \bigcap_{\overtwocond{j=1,\ldots,s}{x_i \not\in G_j}} \{l_{G_j} \ge \delta\}.
\end{equation*}
We have 
\begin{equation*}
 \mymark{D_k(S)}_{x_i,\delta} \subseteq \mymark{D_k(S)}_{x_i} \subseteq \cl \mymark{D_k(S)}_{x_i} = \bigcap_{\overtwocond{j=1,\ldots,s}{x_i \not\in G_j}} \{l_{G_j} \ge 0\}.
\end{equation*}
By Remark~\ref{rem:polytopes}f)ii) there exists a $\delta>0$ such that 
\begin{equation} \label{Q inclusion}
 Q_{k-1} \subseteq \bigcup_{i=1}^m \mymark{D_k(S)}_{x_i,\delta}.
\end{equation}

\emph{Step~5: Notations and Remarks.} For every decomposition $\{1,\ldots,m\} = \{i\} \cup A \cup B$ let 
\begin{align*}
 Q_{k-1}^+(A) & := \bigcap_{\alpha \in A} Q_{k-1} \cap \mymark{D_k(S)}_{x_\alpha}, \\
Q_{k-1}^-(B) & := \bigcap_{\beta \in B} Q_{k-1} \setminus \mymark{D_k(S)}_{x_\beta}, \\
Q_{k-1}^0(i) & := Q_{k-1} \cap \mymark{D_k(S)}_{x_i,\delta}, \\
Q_{k-1}(i,A,B) & := Q_{k-1}^0(i) \cap Q_{k-1}^+(A) \cap Q_{k-1}^-(B).
\end{align*}
The sets $Q_{k-1}^0(i)$ and $Q_{k-1}^-(B)$ are compact, the other two are in general not. In view of \eqref{Q inclusion}, we have  $Q_{k-1} = \bigcup_{(i,A,B)} Q_{k-1}(i,A,B)$, where the union runs over all partitions $\{1,\ldots,m\} = \{i\} \cup A \cup B$. Also, the sets $Q_{k-1}(i,A,B)$ form a partition of $Q_{k-1}(i)$ for a fixed $i$. 

\emph{Step~6: Construction of a polynomial, which satisfies the assertion on $Q_{k-1}$.} Again $\ext S = \{x_0,\ldots,x_m\}$, $G_1,\ldots,G_s$ are facets of $D_k(S)$ and $\delta$ is chosen as in Step~4. Moreover, by induction we constructed already polynomials $q_{i,k},$ $i=1,\ldots,m$. We choose $\rho>\delta$ such that $|l_{G_j}(x)| \le \rho$ for all $x \in R_k$, $j=1,\ldots,s$. Consider the polynomial $\kappa=\kappa_{\delta,\rho,m} \in \real[t]$ as in Lemma~\ref{lambda:mu:lem}.\ref{kappa:part}). We set
\begin{equation*}
 r_{i,k} := q_{i,k} \prod_{\overtwocond{j=1,\ldots,t}{x_i \not\in G_j}} (\kappa \circ l_{G_j})^n
\end{equation*}
and
\begin{equation*}
 r_k:= \sum_{i=1}^m r_{i,k},
\end{equation*}
where $n \in \natur$ is to be fixed later. We claim the following.

\emph{Step~7: Verification that $r_k$ is a local solution.} For sufficiently large $n$ one has:
\begin{enumerate}[a')]
 \item $r_k \ge 0$ on $S$,
 \item $r_k \le 0$ on $Q_{k-1}$,
	\item $\{r_k=0\} \cap Q_{k-1} \subseteq S$.
\end{enumerate}
Let us prove a'). We have $q_{i,k} \ge 0$ on $S$ for $i=1,\ldots,m$ and $\kappa \ge 0$ on $[-\rho,\rho]$, hence $r_{i,k} \ge 0$ on $S$ and finally $r_k\ge 0$ on $S$. In view of \eqref{Q inclusion} we may replace $Q_{k-1}$ by $Q_{k-1}(i,A,B)$ for a given decomposition $\{1,\ldots,m\} = \{i \} \cup A \cup B$. Clearly, we have $r_{i,k} \le 0$ on $Q_{k-1}^0(i)$ and $\{r_{i,k} = 0 \} \cap Q_{k-1}^0(i) \subseteq S$. Note that the factor $\prod_{\overtwocond{j=1,\ldots,t}{x_i \not\in G_j}} (\kappa \circ l_{G_j})^n$ is positive and arbitrarily large for sufficiently large $n$. Let 
	\begin{align*}
		r_{k,A} := \sum_{\alpha \in A} r_{\alpha,k} & & \mbox{and} & & r_{k,B} := \sum_{\beta \in B} r_{\beta,k}, 
	\end{align*}
so $r_k= r_{i,k} + r_{A,k} + r_{B,k}$. We have $r_{A,k} \le 0$ on $Q_{k-1}^0(i) \cap Q_{k-1}^+(A)$. So it is sufficient to show that $|r_{B,k}| \le \frac{1}{2} |r_{i,k}|$ on $Q_{k-1}^0(i) \cap Q_{k-1}^-(B)$. For this it is enough to show that $|r_{\beta,k}| \le \frac{1}{2m} |r_{i,k}|$ on $Q_{k-1}^0(i) \setminus \mymark{D_{k}(S)}_{x_{\beta}}$ for all $\beta \in B$. We write 
$$
	Q_{k-1}^0(i) \setminus \mymark{D_k(S)}_{x_\beta} = \bigcup_{\overtwocond{j=1,\ldots,t}{x_\beta \not\in G_j}} \left( Q_{k-1}^0(i) \cap \{l_{G_j} \le 0\}\right).
$$
So finally it remains to show that $|r_{\beta,k}| \le \frac{1}{2m} |r_{i,k}|$ on $Q_{k-1}^0(i) \cap \{l_{G_j} \le 0\}$ for given $\beta, j$ and all sufficiently large $n \in \natur$. Clearly $\{r_{i,k}=0\} \subseteq \{r_{\beta,k}=0\}$ and $q_{\beta,k}$ is bounded on this set. Moreover, by the properties of $\kappa$, we have 
$$\left| \prod_{\overtwocond{j=1,\ldots,t}{x_\beta \not\in G_j}} l_{G_j}\right| \le \frac{3}{4}.$$
Thus, the claim follows from the H\"ormander-{\L}ojasiewicz inequality (see Theorem~\ref{more than loj}).

\emph{Step 8: Conclusion.} Let $T_1 := Q_{k-1}$, $T_2 := D_{k-1}(S) \setminus \intr (R_k)$ and $h \in \real[X]$ be a non-negative polynomial which vanishes only on the Zariski closure of $S \cap T_1 = \ext_k S$. Such a polynomial can easily be found with the additional property that $h > 0$ on $T_2$ and for $x \in T_2$: $h(x) \rightarrow \infty$ as $\|x\|\rightarrow \infty$. Finally, let $f=r_k$. For $S, T_1, T_2, h, f$ we apply Proposition~\ref{S:T1:T2}, which gives us a polynomial $p_k$ such that $p_k \ge r_k$ on $S$, $\sign (p_k) \le \sign (r_k)$ on $Q_{k-1}$ and $p_k<0$ on $D_{k-1}(S) \setminus \intr (R_k)$. That means $p_k$ satisfies a)-c).
\end{proof}

As an immediate consequence of Proposition~\ref{main proposition} we obtain

\begin{corollary} \label{polytopes:result} 
	Let $S \subseteq \real^d$ be a $d$-dimensional polytope. Then there are polynomials $p_0,\ldots,p_{d-1} \in \real[X]$ such that $S = \{p_0 \ge 0, \ldots, p_{d-1} \ge 0\}.$ 
\end{corollary}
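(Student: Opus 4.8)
The plan is to read off the statement directly from Proposition~\ref{main proposition} together with the nested chain of $k$-supports supplied by Remark~\ref{rem:polytopes}\ref{chain:of:k:supports}. First I would fix once and for all a sequence
$$
  S = D_{d-1}(S) \subseteq \cdots \subseteq D_0(S) \subseteq D_{-1}(S) = \real^d
$$
of $k$-supports as in that remark, and let $p_0,\ldots,p_{d-1}$ be the polynomials that Proposition~\ref{main proposition} produces for this particular choice of the $D_k(S)$. The inclusion $S \subseteq \{p_0 \ge 0,\ldots,p_{d-1} \ge 0\}$ is then immediate, since part a) of the proposition says $p_k \ge 0$ on $S$ for each $k$.

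For the reverse inclusion I would argue by contraposition: given $x \in \real^d \setminus S$, I want an index $k$ with $p_k(x) < 0$. Since $x \notin S = D_{d-1}(S)$ we have $x \notin \intr(D_{d-1}(S))$, so the set of indices $k \in \{0,\ldots,d-1\}$ with $x \notin \intr(D_k(S))$ is nonempty; let $k$ be its minimum. By minimality of $k$ (and because $D_{-1}(S) = \real^d$ when $k = 0$) the point $x$ lies in $D_{k-1}(S)$, while by the choice of $k$ it lies outside $\intr(D_k(S))$; hence $x \in D_{k-1}(S) \setminus \intr(D_k(S))$. Now part b) of the proposition gives $p_k(x) \le 0$, and part c) gives $p_k(x) \ne 0$ because $x \notin S$. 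Therefore $p_k(x) < 0$, so $x \notin \{p_0 \ge 0,\ldots,p_{d-1} \ge 0\}$, which finishes the argument.

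The only point that needs care in this derivation is that the chain really be nested, so that ``the first $D_k$ whose interior fails to contain $x$'' is well defined and automatically places $x$ in the annular region $D_{k-1}(S) \setminus \intr(D_k(S))$ on which Proposition~\ref{main proposition} controls the sign of $p_k$; this is exactly why one invokes the particular sequence of $k$-supports of Remark~\ref{rem:polytopes}\ref{chain:of:k:supports} rather than arbitrary ones. Beyond this bookkeeping there is no obstacle: the substantive work sits entirely inside Proposition~\ref{main proposition} --- in particular the inductive Step~7, where the H\"ormander--{\L}ojasiewicz inequality of Theorem~\ref{more than loj} is used to dominate the unwanted summands --- and the passage from that proposition to the corollary is routine.
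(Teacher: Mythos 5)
Your proposal is correct and is exactly the derivation the paper has in mind when it says the corollary is ``an immediate consequence of Proposition~\ref{main proposition}'': fix the nested chain of $k$-supports from Remark~\ref{rem:polytopes}\ref{chain:of:k:supports}, take the $p_k$ from Proposition~\ref{main proposition}, use part a) for one inclusion, and for the other observe that any $x\notin S$ lands in $D_{k-1}(S)\setminus\intr(D_k(S))$ for the smallest $k$ with $x\notin\intr(D_k(S))$, whereupon b) and c) force $p_k(x)<0$. Nothing to add.
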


In the proof of Theorem~\ref{every polyhedron is representable} we shall use the following observation, see \cite[Proposition~2.1]{Groetschel-Henk-2003} and also \cite[Section~6.5]{MR1659509} or use an argument with fans as in \cite[Chapter VI, Section~7]{MR1393194}.

\begin{proposition} \label{number of vanishing poly}
 Let $S$ be a $d$-polyhedron in $\real^d$, let $S = \{q_1 \ge 0,\ldots,q_m \ge 0\}$ for $q_1,\ldots,q_m \in \real[X]$ and let $F$ be a face of $S.$ Then at least $d-\dim F$ polynomials $q_i$ vanish on $F$. In particular, $m \ge d- \dim F$.
\end{proposition}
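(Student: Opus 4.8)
The plan is to reduce the assertion to a purely local statement at a generic relative interior point of $F$, and then to invoke the known lower bound for the number of polynomial inequalities needed to describe a pointed polyhedral cone near its apex.

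Write $I(F) := \setcond{i}{q_i \equiv 0 \text{ on } F}$; it suffices to show $|I(F)| \ge d - \dim F$, since these polynomials lie among $q_1,\dots,q_m$. Because $F$ has nonempty interior in $\aff F$, a polynomial vanishes on $F$ if and only if it vanishes on $\aff F$; hence for $i \notin I(F)$ the set $\{q_i = 0\} \cap \aff F$ is a proper algebraic subset of $\aff F$. As $\relint F$ is a nonempty open subset of $\aff F$, it is not covered by finitely many such subsets, so we may fix $x \in \relint F$ with $q_i(x) = 0 \iff i \in I(F)$. Choosing $\eps > 0$ small enough that $q_i > 0$ on $B(x,\eps)$ for all $i \notin I(F)$, we obtain $S \cap B(x,\eps) = \setcond{y \in B(x,\eps)}{q_i(y) \ge 0 \text{ for all } i \in I(F)}$.

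Next I use the local shape of $S$ at $x$. Put $L := \aff F - x$, a linear subspace of dimension $e := \dim F$, and $N := L^\perp$, of dimension $d - e$. Since $S$ is a $d$-dimensional polyhedron and $x$ lies in the relative interior of its face $F$, after shrinking $\eps$ one has $S \cap B(x,\eps) = (x + L + C) \cap B(x,\eps)$, where $C$ is a polyhedral cone contained in $N$; here $\dim C = d - e$ because $S \cap B(x,\eps)$ is $d$-dimensional, and $C$ is pointed because $L$ is exactly the direction space of the smallest face of $S$ containing $x$, so $x + L + C$ contains no line with direction outside $L$. Restricting the polynomials $q_i$ with $i \in I(F)$ to the affine subspace $x + N \cong \real^{d-e}$ gives polynomials $\tilde q_i$ with $\tilde q_i(0) = 0$; intersecting the two descriptions of $S\cap B(x,\eps)$ with $x+N$ and using $(L+C)\cap N = C$, we get that in a neighbourhood of the origin of $N$
$$ C \;=\; \setcond{v \in N}{\tilde q_i(v) \ge 0 \text{ for all } i \in I(F)}. $$
Thus the full-dimensional pointed cone $C$ in $\real^{d-e}$ is cut out, near its apex, by $|I(F)|$ polynomial inequalities whose polynomials all vanish at the apex.

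Finally, I appeal to the known fact that a full-dimensional pointed polyhedral cone in $\real^n$ cannot be described, in any neighbourhood of its apex, by fewer than $n$ polynomial inequalities whose defining polynomials vanish at the apex; this is \cite[Proposition~2.1]{Groetschel-Henk-2003}, and can also be extracted from \cite[Section~6.5]{MR1659509} or proved via the theory of fans as in \cite[Chapter~VI, Section~7]{MR1393194}. Applying it with $n = d - e$ gives $|I(F)| \ge d - \dim F$, whence also $m \ge d - \dim F$. I expect this last input to be the main obstacle to a short self-contained argument: the naive count of one inequality per facet, via gradients, is not legitimate because a defining polynomial may vanish to higher order at the apex, which is exactly what forces the use of real-algebraic machinery (fans, or the sign-change count behind \L{}ojasiewicz-type estimates).
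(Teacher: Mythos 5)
The paper itself does not prove this proposition; it is stated as a known observation and the reader is pointed to \cite[Proposition~2.1]{Groetschel-Henk-2003}, \cite[Section~6.5]{MR1659509}, and the fan argument of \cite[Chapter~VI, Section~7]{MR1393194}. Your proposal goes further: you reduce the claim to the case of a full-dimensional pointed cone at its apex, via a generic $x \in \relint F$, the index set $I(F)$, and the normal slice $N = (\aff F - x)^{\perp}$. That reduction is geometrically sound, and your closing remark --- that a naive gradient count is illegitimate because a defining polynomial may vanish to higher than first order at the apex --- correctly identifies why some real-algebraic input is unavoidable.

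The one step that does not quite close is the final appeal. After the reduction you need a \emph{local} statement: a full-dimensional pointed cone in $\real^n$ cannot be cut out \emph{in a neighbourhood of its apex} by fewer than $n$ polynomial inequalities vanishing at the apex. You attribute this to \cite[Proposition~2.1]{Groetschel-Henk-2003}, but that proposition (which is essentially the very statement being proved) concerns \emph{global} polynomial descriptions of polyhedra; a local description of $C$ near $0$ by the $\tilde q_i$ need not extend to a global polynomial description of $C$, so the cited result does not directly apply, and the appeal risks circularity. There are two easy repairs. The fan-theoretic argument in \cite{MR1393194} is intrinsically local (it lives in the spaces of orderings of local rings), so it does yield the local version; citing that reference for the local statement is clean. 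Alternatively, avoid localizing altogether: with $x \in \relint F$ generic as you chose it, $S \cap (x+N)$ is a full-dimensional polyhedron in $x + N \cong \real^{d-\dim F}$, described \emph{globally} by the restrictions $q_i\vert_{x+N}$, and $\{x\}$ is a vertex of it (a supporting hyperplane $H$ of $S$ with $H \cap S = F$ gives a hyperplane $H \cap (x+N)$ of $x+N$ supporting $S\cap(x+N)$ exactly in $F\cap(x+N)=\{x\}$). Now \cite[Proposition~2.1]{Groetschel-Henk-2003} applies in its stated global form, in the vertex case, and yields $|I(F)| \ge d - \dim F$ as required.
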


\begin{proof}[Proof of Theorem~\ref{every polyhedron is representable}]
Most arguments presented below are also given in \cite{Averkov-Henk-2009b}, but for the sake of completeness we give the whole proof. 

We write $S=S_0 \times \real^k$, where $\real^d = \real^{d-k} \times \real^k$ and $S_0 \subseteq \real^{d-k}$ in suitable coordinates. We may assume that $k< d$, so $S$ admits at least one $k$-face, hence, by Proposition~\ref{number of vanishing poly}, $S$ cannot be described by fewer than $d-k$ polynomials.

Conversely, every polynomial description of $S_0$ easily extends to $S$. Note also, that $S_0$ does not contain any line. So it remains to show that a polyhedron $S \subseteq \real^d$ which does not contain any line is representable by $S=\{p_0 \ge 0,\ldots,p_{d-1} \ge 0\}$ for suitable $p_0,\ldots,p_{d-1} \in \real[X].$

For this we consider $\real^d$ as affine subspace $\setcond{x \in \real^{d+1}}{x_{d+1}=1}$ in $\real^{d+1}$ and form the cone $C^0$ over $S$, that is $C^0 := \setcond{\lambda a}{a \in S, \ \lambda \ge 0}$. The set $C:= \cl C^0$ is a polyhedral cone. Since $S$ does not contain a line, there is a linear form $l$ of $\real^{d+1}$ such that $C$ intersects the hyperplane $l=1$ properly, that is $S':=\{l=1\} \cap C$ is a polytope. By Corollary~\ref{polytopes:result} we can find polynomials $q_0,\ldots,q_{d-1}$ such that $S' = \{q_0 \ge 0,\ldots,q_{d-1} \ge 0, \ l=1\}$. Possibly after an appropriate modification with the help of $l$, we assume that $q_0,\ldots,q_{d-1}$ are homogeneous and of even degree. We would like that $\{q_0 \ge 0,\ldots,q_{d-1} \ge 0\} = C \cup (-C)$, but this is possibly not true, since $\{q_0 \ge 0,\ldots,q_{d-1} \ge 0, \ l=0\}$ may contain points other than $o$. In order to avoid the above situation we keep $q_1,\ldots,q_{d-1}$ as before and adjust $q_0$. We have $\{q_0=0\} \cap S = \ext S$ and $\{q_0 \ge 0, \ l=1\}$ is compact. Then $q_0$ is negative semidefinite on $\{l=0\}$. We need to replace $q_0$ by a polynomial $p$ with $\{p \ge 0, \ l=0\} = \{o\}$. For this let $r_1,\ldots,r_m$ be (homogeneous) non-negative quadratic polynomials such that $\{r_1=0\},\ldots,\{r_m=0\}$ are the affine hulls of the extremal rays of $C$, and let $r:=r_1,\ldots,r_m$. Now let 
$$
	p:=q_0 l^{N-\deg q_0} - c r^N,
$$
where $c>0$ is sufficiently small and $N \in \natur$ is sufficiently large. Then $\{p \ge 0, \ l=0\} = \{o\}$ and $S \subseteq \{p \ge 0, \ l=1\}$ by the H\"ormander-{\L}ojasiewicz inequality. Hence 
$$
	\{p\ge 0, \ q_1 \ge 0, \ldots,q_{d-1} \ge 0\} = C \cup (-C)
$$
Now we set $p_0(X_1,\ldots,X_d) := p(X_1,\ldots,X_d,1)$ and $p_i(X_1,\ldots,X_d) = q_i(X_1,\ldots,X_d,1)$ for $i=1,\ldots,d-1$.
\end{proof}

\begin{remark}
	From the proofs we see that the polynomial representation $S = \{p_0 \ge 0,\ldots, p_{d-1} \ge 0\}$ of a $d$-dimensional polytope $S$ that we construct above satisfies the condition 
	\begin{equation} \label{p_i ext_i}
		\{p_i = 0 \} \cap S \subseteq \ext_i S 	
	\end{equation}
   for $i \in \{0,\ldots,d-1\}$. In fact, an arbitrarily representation of this size can easily be converted into a form satisfying the above properties. Let $S = \{q_0 \ge 0,\ldots,q_{d-1} \ge 0\}$ be an arbitrary polynomial representation of a $d$-dimensional polyhedron $S$. For $p_i:=\sigma_{i+1}(q_0,\ldots,q_{d-1})$ one has $S = \{p_0 \ge 0,\ldots, p_{d-1} \ge 0\}$ (see Lemma~\ref{el:sym:lemma}). Furthermore, in view of Proposition~\ref{number of vanishing poly},  $p_i$'s satisfy \eqref{p_i ext_i}.
\end{remark}

\small
\providecommand{\bysame}{\leavevmode\hbox to3em{\hrulefill}\thinspace}
\providecommand{\MR}{\relax\ifhmode\unskip\space\fi MR }
\providecommand{\MRhref}[2]{%
  \href{http://www.ams.org/mathscinet-getitem?mr=#1}{#2}
}
\providecommand{\href}[2]{#2}

\end{document}